\providecommand{\N}{\ensuremath\mathbb N}
\providecommand{\R}{\ensuremath \mathbb R}
\providecommand{\spt}{\ensuremath \text{spt}}
\newtheorem{thm}{Theorem}
\newtheorem{assum}[thm]{Assumption}
\newtheorem{rem}[thm]{Remark}
\newtheorem{lem}[thm]{Lemma}
\newcommand{\LF}{\mathcal{L}_F}
\newcommand{\Lf}{\mathcal{L}_f}
\newcommand{\Lg}{\mathcal{L}_g}
\newcommand{\M}{\mathcal{M}}
\begin{document}
  \title{Control Synthesis for Nonlinear Optimal Control via Convex Relaxations
  \thanks{This work was supported by Ford Motor Company.}}
  \author{Pengcheng Zhao%
  \thanks{P. Zhao and R. Vasudevan are with the Department of Mechanical Engineering, University of Michigan, Ann Arbor, MI 48109 USA {\tt\scriptsize \{pczhao,ramv\}@umich.edu}}
  \and
  Shankar Mohan%
  \thanks{S. Mohan is with the Department of Electrical Engineering and Computer Science, University of Michigan, Ann Arbor, MI 48109 USA {\tt\scriptsize elemsn@umich.edu}}
  \and 
  Ram Vasudevan%
  \footnotemark[2]
  }
  \maketitle
  \begin{abstract}
      This paper addresses the problem of control synthesis for nonlinear optimal control problems in the presence of state and input constraints. 
      The presented approach relies upon transforming the given problem into an infinite-dimensional linear program over the space of measures. 
      To generate approximations to this infinite-dimensional program, a sequence of Semi-Definite Programs (SDP)s is formulated in the instance of polynomial cost and dynamics with semi-algebraic state and bounded input constraints.
      A method to extract a polynomial control function from each SDP is also given. 
      This paper proves that the controller synthesized from each of these SDPs generates a sequence of values that converge from below to the value of the optimal control of the original optimal control problem.
      In contrast to existing approaches, the presented method does not assume that the optimal control is continuous while still proving that the sequence of approximations is optimal. 
      Moreover, the sequence of controllers that are synthesized  using the presented approach are proven to converge to the true optimal control. 
      The performance of the presented method is demonstrated on three examples.
  \end{abstract}
  \section{Introduction}

A variety of engineering problems require searching for optimal system trajectories while satisfying certain constraints \cite{bertsekas1995dynamic}.
Despite the numerous applications for these optimal control problems, they remain challenging to solve.
Assorted theoretical approaches have been proposed to address these problems including the maximum principle \cite{berkovitz2013optimal} and the Hamilton-Jacobi-Bellman Equation \cite{bardi2008optimal}. 
Due to the challenges associated with applying these methods, various numerical techniques such as direct methods that rely upon gradient descent type algorithms \cite{von1992direct}, and multiple shooting methods that require solving a two-point boundary value problem \cite{polak2012optimization}, have been extensively employed.
Though rendering the optimal control problem more amenable to computation, these approaches struggle to find global minimizers. 

By recasting the optimal control problem into its weak formulation, one can write the same problem as an infinite dimensional Linear Program (LP) over the space of measures \cite{vinter1993}. 
Approximations to this infinite dimensional linear program have recently been developed using Semi-Definite Programming (SDP) hierarchies \cite{lasserre2008}.
This approach has proven an effective method to tractably construct a sequence of lower bounds to the optimal value of the original optimal control problem even in the presence of nonlinear dynamics and states constraints. 

Unfortunately the construction of the optimal controller that achieves this optimal value has remained challenging. 
For example, approaches which rely upon iterating on the extracted solution \cite{henrion2008nonlinear} or assuming the differentiability of the optimal control \cite{korda2016} have been proposed. 
The former method is unable to guarantee the optimality of the generated controller. 
Though the latter approach is able to prove convergence of the controller, a value function, and even a rate of convergence more recently \cite{korda2016}, it assumes differentiability of the optimal control, which is a restrictive assumption \cite{vinter2010optimal}.
This paper addresses these limitations by proposing a method to perform optimal control synthesis for these nonlinear, state-constrained optimal control problems that are solved using SDP approximations to the weak formulation. 

The contributions of this paper are as follows: we formulate the optimal control problem as an infinite-dimensional linear program over the space of non-negative measures, which is amenable to the extraction of the optimal control.
To numerically solve this infinite-dimensional LP, we construct a sequence of relaxations in terms of finite-dimensional SDPs and illustrate how a polynomial control law can be
extracted from a solution to any of the SDPs.
This extraction method extends a recently developed method to perform control synthesis for control problems formulated in the weak sense \cite{majumdar2014}. 
Finally, we prove the convergence of this sequence of control laws to the optimal controller. 

This paper is organized as follows: Section \ref{sec:prelim} defines the problem of interest and key results about occupation measures; Section \ref{sec:LP} formulates the optimal control problem as an infinite-dimensional LP and proves its equivalence to the original optimal control problem; Section \ref{sec:implementation} provides a means to solve the infinite-dimensional problem using a sequence of SDP relaxations, and control extraction method is discussed; Section \ref{sec:examples} demonstrates the performance of our approach on three examples.

  \section{Preliminaries}
\label{sec:prelim}

In this section, we introduce necessary notation for this paper, and formalize our problem of interest.
We make substantial use of measure theory in this paper, and the unfamiliar reader may wish to consult \cite{bogachev2007} for an introduction.

\subsection{Notation}

Given an element $y \in \R^{n}$, let $[y]_i$ denote the $i$-th component of $y$. 
We use the same convention for elements belonging to any multidimensional vector space. 
Let $\R[y]$ denote the ring of real polynomials in the variable $y$. 
Suppose $K$ is a compact Borel subset of $\R^n$, then we let $C(K)$ be the space of continuous functions on $K$ and $\M(K)$ be the space of finite signed Radon measures on $K$, whose positive cone $\M_+(K)$ is the space of unsigned Radon measures on $K$. 
Since any measure $\mu \in \M(K)$ can be viewed as an element of the dual space to $C(K)$, the duality pairing of $\mu$ on a test function $v \in C(K)$ is:
\begin{equation}
\langle \mu, v \rangle := \int_K v(z) \mu(z).
\end{equation}
For any $\mu \in \M(K)$, we let the support of $\mu$ be denoted as $\spt(\mu)$.
A probability measure is a non-negative measure whose integral is one.


\subsection{Problem Formulation}
Consider a control-affine system:
\begin{equation}
    \label{eq:def:F}
    \dot{x}(t) = f(t,x(t)) + g(t,x(t)) u(t) =:F(t,x(t),u(t))
\end{equation}
where $f:[0,\infty) \times \R^n \to \R^n$ is a continuous function, $g[0,\infty) \times \R^n \to \R^{n \times m}$ is a continuous function, $x(t) \in \R^n$ is the system's state at time $t$, and $u(t) \in \R^m$ is a control action at time $t$. 
Furthermore, the control $u$ is a Borel measurable function defined on an interval $[0,T]$ which satisfies the input constraint:
\begin{equation}
\label{eq:def:U}
u(t) \in U = [a_1, b_1] \times \cdots \times [a_m,b_m]
\end{equation}
where $\{a_1, \cdots, a_m,b_1,\cdots, b_m\} \subset \mathbb{R}$. 
\begin{rem}
\label{rem:compact}
Without loss of generality, we assume $U$ is a closed unit box, i.e. $U = [-1,1]^m$ (since $g$ can be arbitrarily shifted and scaled).
\end{rem}

The objective of this paper is to find a control that satisfies this input constraint and an associated finite-time trajectory of the system beginning from a given initial condition that satisfies a set of state constraints while reaching a target set and minimizing a user-specified cost function. 
To formulate this problem, we first define the state constraint and target sets as $X$ and $X_T$, respectively, with $X_T \subset X \subset \R^n$.
We further assume that:
\begin{assum}
$X$ and $X_T$ are compact sets.
\end{assum}
\noindent The compactness ensures our optimization problem is well-posed. 

Next, we define an \emph{admissible trajectory} of this system. Given a point $x_0 \in \R^n$ and a $T > 0$, a control $u :[0,T] \to U$ is said to be \emph{admissible} if there exists an absolutely continuous function $x : [0,T] \to \R^n$ such that:
\begin{enumerate}[(i)]
\item $x(0) = x_0$ and  $x(T) \in X_T$,
\item $x(t) \in X$ for all  $t \in [0,T]$, and
\item $\dot{x}(t) = F(t,x(t),u(t))$ for almost every $t \in [0,T]$.
\end{enumerate}
The function $x$ which satisfies these requirements and corresponds to the admissible control $u$ is called an \emph{admissible trajectory}, and the pair $(x,u)$ is called an \emph{admissible pair}. 
We denote the space of admissible trajectories and controls by ${\cal X}_T$ and ${\cal U}_T$, respectively.
The space of admissible pairs is denoted as ${\cal P}_T \subset {\cal X}_T \times {\cal U}_T$.
Note that the ODE may not admit a unique solution, so the behavior of the system may not be fully characterized by the control, but instead by the admissible pair.
Finally, for each admissible pair $(x,u)$ the cost is defined as:
\begin{equation}
\label{eq:J}
J(x,u) = \int_0^T h(t,x(t),u(t)) \,dt + H(x(T))
\end{equation}
where $h:[0,+\infty) \times \mathbb{R}^n \times \mathbb{R}^m \rightarrow \mathbb{R}$ and $H:\mathbb{R}^n \rightarrow \mathbb{R}$ are Borel measurable functions. 

Our goal is to solve the optimal control problem to find an admissible pair $(x^*,u^*)$ that minimizes the cost in Equation \eqref{eq:J}.
That is, we consider the following optimal control problem:
\begin{flalign}
  &&\inf_{(x,u)} &\phantom{4}\,\int_{0}^T h(t,x(t),u(t))\,dt + H(x(T))  &&(OCP) \nonumber\\
  &&\text{s.t.} &\phantom{4} \dot{x}(t) = F(t,x(t),u(t)) \text{ a.e. } t \in [0,T], \nonumber\\
  &&& \phantom{4} u(t)\in U \quad \forall t\in [0,T], \nonumber\\
  &&& \phantom{4} x(t)\in X \quad \forall t\in [0,T], \nonumber\\
  &&& \phantom{4} x(T) \in X_T, \nonumber\\
  &&& \phantom{4} x(0)=x_0, \nonumber
\end{flalign}
where optimization is over the space of measurable control inputs and absolutely continuous functions on $[0,T]$. 
The optimal cost is defined as
\begin{equation}
\label{eq:opt_cost}
J^* = \hspace*{-0.5em} \inf\limits_{(x,u) \in {\cal P}_T} \hspace*{-0.5em} J(x,u).
\end{equation}


\subsection{Liouville's Equation}

To address this problem, we begin by defining measures whose supports model the evolution of families of trajectories. 
An initial condition and its evolution can be understood via Equation \eqref{eq:def:F}, but the evolution of a family of trajectories understood via a measure must be formalized in a different manner:
Let $\LF: C^1\left( [0,T]\times X \right) \rightarrow C\left( [0,T]\times X \times U \right)$ be a linear operator which acts on a test function $v$ as:
\begin{equation}
\label{eq:LF}
\LF v = \frac{\partial v}{\partial t} + \sum_{k=1}^n \frac{\partial v}{\partial x_k}[F(t,x,u)]_k
\end{equation}
Similarly let $\Lf: C^1\left( [0,T]\times X \right) \rightarrow C\left( [0,T]\times X \right)$ be a linear operator which acts on a test function $v$ as:
\begin{equation}
\label{eq:Lf}
\Lf v = \frac{\partial v}{\partial t} + \sum_{k=1}^n \frac{\partial v}{\partial x_k}[f(t,x)]_k
\end{equation}
Finally, let $\Lg: C^1\left( [0,T]\times X \right) \rightarrow C\left( [0,T]\times X \right)^m$ be a linear operator which acts on a test function $v$ as:
\begin{equation}
\label{eq:Lg}
[\Lg v]_j = \sum_{k=1}^n \frac{\partial v}{\partial x_k}[g(t,x)]_{kj}, \quad \forall j \in \{1, \cdots, m\}
\end{equation}


\begin{rem}
Using the dual relationship between measures and functions, let $\LF': C([0,T] \times X \times U)' \rightarrow C^1([0,T] \times X)'$ be the adjoint operator of $\LF$:
\begin{equation}
\langle \LF' \mu, v \rangle = \langle \mu, \LF v \rangle = \hspace*{-0.3cm} \int\limits_{[0,T] \times X \times U} \hspace*{-0.3cm} \LF v(t,x)\, d\mu(t,x,u)
\end{equation}
for all $\mu \in \M_+([0,T]\times X \times U)$ and $v \in C^1([0,T] \times X)$. 
Similarly, we can define $\Lf' : C([0,T] \times X)' \rightarrow C^1([0,T] \times X)'$ and $\Lg' : \left( C([0,T] \times X)^m \right)' \rightarrow C^1([0,T] \times X)'$ as the adjoint operators of $\Lf$ and $\Lg$, respectively.
\end{rem}

Each of these adjoint operators can be used to describe the evolution of families of trajectories of the system. 
To formalize this relationship, consider an admissible trajectory $x$ defined on $t \in [0,T]$, we define its associated \emph{occupation measure}, denoted as $\mu(\cdot \mid x) \in \M_+([0,T] \times X)$, as:
\begin{equation}
\label{eq:def:mu|x}
\mu(A \times B \mid x) = \int_0^T I_{A \times B}( t, x(t))\, dt
\end{equation}
for all subsets $A \times B$ in the Borel $\sigma$-algebra of $[0,T] \times X$, where $I_{A \times B}(\cdot)$ denotes the indicator function on a set $A \times B$. 
The quantity $\mu(A \times B \mid x)$ is equal to the amount of time the graph of the trajectory, $(t,x(t))$, spends in $A \times B$. 
Similarly, the terminal measure associated with $x$ is defined as:
\begin{equation}
\label{eq:def:muT}
\mu_T(B \mid x) = I_B(x(T)),
\end{equation}
where $\mu_T(\cdot \mid x) \in {\cal M}_+(X_T)$.

If the control action $u$ associated with an absolutely continuous function $x$ is also given, the occupation measure associated with the pair $(x,u)$, denoted as $\mu(\cdot \mid x,u) \in \M_+([0,T] \times X \times U)$, can be defined as:
\begin{equation}
\label{eq:def:mu}
\mu(A \times B \times C \mid x,u) = \int_0^T I_{A \times B \times C}(t,x(t),u(t))\, dt
\end{equation}
for all subsets $A \times B \times C$ in the Borel $\sigma$-algebra of $[0,T]\times X\times U$.

\begin{rem}
Let $\mu(\cdot \mid x,u)$ and $\mu_T(\cdot \mid x)$ be the occupation measure and terminal measure associated with the pair $(x,u)$, respectively. By definitions \eqref{eq:def:muT} and \eqref{eq:def:mu}, the cost function can be expressed as:
\begin{equation}
J(x,u) = \langle \mu(\cdot \mid x,u), h \rangle + \langle \mu_T(\cdot \mid x), H \rangle.
\end{equation}
\end{rem}
\noindent Notice that despite the cost function potentially being a nonlinear function for the admissible pair in the space of functions, the analogous cost function over the space of measures is linear. 
In fact, a similar analogue holds true for the dynamics of the system.
That is, the occupation measure associated with an admissible pair satisfy a linear equation over measures:
\begin{lem}
\label{lem:traj2le}
Given an admissible pair $(x,u)$, its occupation measure and terminal measure $(\mu(\cdot \mid x,u), \mu_T(\cdot \mid x))$ satisfy Liouville's Equation which is defined as:
\begin{equation}
v(0,x_0) + \langle \mu(\cdot \mid x,u), \LF v \rangle = \langle \mu_T(\cdot \mid x), v(T,\cdot) \rangle
\end{equation}
for all test functions $v \in C^1([0,T]\times X)$. 
Since this is true for all test function, we write it as a linear operator equation:
\begin{equation}
\label{eq:le}
\delta_{(0,x_0)} + \LF' \mu(\cdot \mid x,u) = \delta_T \otimes \mu_T(\cdot \mid x)
\end{equation}
where $\delta_{(0,x_0)}$ is a Dirac measure at $t=0$, $x = x_0$, $\delta_T$ is a Dirac measure at a point $t=T$, and $\otimes$ denotes the product of measures.
\end{lem}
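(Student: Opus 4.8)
The plan is to reduce this measure-theoretic identity to the fundamental theorem of calculus applied along the given trajectory. Fix an arbitrary test function $v \in C^1([0,T]\times X)$ and consider the scalar function $\phi(t) := v(t,x(t))$ on $[0,T]$. Since $x$ is absolutely continuous with values in the compact set $X$ and $v$ is continuously differentiable, $\phi$ is absolutely continuous on $[0,T]$, so $\phi(T)-\phi(0)=\int_0^T \dot\phi(t)\,dt$.

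Next I would compute $\dot\phi$ by the chain rule: for almost every $t\in[0,T]$ — namely wherever $x$ is differentiable and the admissible ODE holds —
\[
\dot\phi(t) = \frac{\partial v}{\partial t}(t,x(t)) + \sum_{k=1}^n \frac{\partial v}{\partial x_k}(t,x(t))\,\dot x_k(t) = \frac{\partial v}{\partial t}(t,x(t)) + \sum_{k=1}^n \frac{\partial v}{\partial x_k}(t,x(t))\,[F(t,x(t),u(t))]_k,
\]
which is precisely $(\LF v)(t,x(t),u(t))$ by the definition \eqref{eq:LF}. Substituting and using $x(0)=x_0$ gives $v(T,x(T)) - v(0,x_0) = \int_0^T (\LF v)(t,x(t),u(t))\,dt$.

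Then I would identify each side with a duality pairing. By the definition of the occupation measure in \eqref{eq:def:mu} — verified first for indicator functions and then extended to $\LF v \in C([0,T]\times X\times U)$ by the standard simple-function approximation together with monotone/dominated convergence — the right-hand integral equals $\langle \mu(\cdot\mid x,u), \LF v\rangle$; this pairing is finite since $\LF v$ is continuous on the compact set $[0,T]\times X\times U$. By the definition of the terminal measure in \eqref{eq:def:muT}, $\mu_T(\cdot\mid x)$ is the Dirac measure at $x(T)$, so $v(T,x(T)) = \langle \mu_T(\cdot\mid x), v(T,\cdot)\rangle$. Rearranging yields the stated scalar identity, and since $v\in C^1([0,T]\times X)$ was arbitrary, the operator form \eqref{eq:le} follows from the definitions of the adjoint $\LF'$ and of the Dirac measures $\delta_{(0,x_0)}$ and $\delta_T$.

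The only genuinely delicate point is the first step: confirming that $t\mapsto v(t,x(t))$ is absolutely continuous and that the chain rule holds almost everywhere. This is exactly where absolute continuity of $x$ (rather than mere continuity) is essential — it is a classical fact that the composition of a $C^1$ map with an absolutely continuous curve is absolutely continuous, with derivative given a.e.\ by the chain rule — and it is the step where the admissibility hypothesis $\dot x(t) = F(t,x(t),u(t))$ a.e.\ enters. Everything after that is bookkeeping with the definitions of the occupation and terminal measures.
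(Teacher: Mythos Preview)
Your proof is correct and is exactly the argument the paper has in mind: the paper's own proof is the single sentence ``This lemma follows directly from the definition of occupation measure and terminal measure,'' and you have simply spelled out that direct verification via the fundamental theorem of calculus along the trajectory.
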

\begin{proof}
This lemma follows directly from the definition of occupation measure and terminal measure.
\end{proof}

Now one can ask whether the converse relationship holds. 
That is, do measures that satisfy Liouville's Equation correspond to trajectories that satisfy the dynamical system defined in Equation \eqref{eq:def:F}?
To answer this question, we first consider a family of trajectories modeled by a probability measure $\rho \in \M_+({\cal X}_T)$ defined on the space of admissible trajectories and define an average occupation measure $\zeta \in \M_+([0,T] \times X)$ for the family of trajectories as:
\begin{equation}
\label{eq:def:zeta}
\zeta(A \times B) = \int_{{\cal X}_T} \mu(A \times B \mid x(\cdot))\, d\rho(x(\cdot))
\end{equation}
and an average terminal measure, $\zeta_T \in \M_+(X_T)$, by
\begin{equation}
\zeta_T(B) = \int_{{\cal X}_T} \mu_T(B \mid x(\cdot))\, d\rho(x(\cdot))
\end{equation}
The solutions to Liouville's Equation can be marginalized into a conditional measure over the input given a state and time and a marginal distribution over just state and time. 
Each marginal distribution in fact coincides with the average measures, and the corresponding conditional probability distribution corresponds to trajectories of the dynamical system as defined in Equation \eqref{eq:def:F}:
\begin{lem}
\label{lem:le2traj}
Let $(\mu, \mu_T)$ be a pair of measures satisfying Liouville's Equation \eqref{eq:le} such that $\spt(\mu) \subset [0,T]\times X \times U$, and $\spt(\mu_T) \subset X_T$. 
Then
\begin{enumerate}[(i)]
\item The measure $\mu$ can be disintegrated as
\begin{equation}
    d\mu(t,x,u) = d\nu(u \mid t,x)\, d\bar{\mu}(t,x)
    = d\nu(u \mid t,x)\, d\mu(x \mid t)\, dt
\end{equation}
where $\bar{\mu}$ is the $(t,x)$-marginal of $\mu$, $\nu(\cdot \mid t,x)$ is a stochastic kernel on $U$ given $(t,x) \in [0,T]\times X$, $\mu(\cdot \mid t)$ is a stochastic kernel on $X$ given $t \in [0,T]$, and $\mu(\cdot \mid T) = \mu_T$.
\item There exists a non-negative probability measure $\rho \in \M_+({\cal X}_T)$ supported on a family of absolutely continuous admissible trajectories $\theta(\cdot)$ that satisfy the differential equation:
\begin{equation}
\label{eq:F(t,x,U)}
\dot{\theta}(t) = \int_U F(t,\theta(t),u)\, d\nu(u \mid t,\theta(t)) \in F(t,\theta(t),U)
\end{equation}
almost everywhere, such that for all measurable functions $w:\R^n \rightarrow \R$ and $t \in [0,T]$
\begin{equation}
\label{eq:rho}
\int_X w(x)\, \mu(x \mid t) = \int_{{\cal X}_T} w(\theta(t))\, d\rho(\theta(\cdot)).
\end{equation}
\item The family of trajectories in the support of $\rho$ starts from $x_0$ at $t=0$. 
Moreover, the average occupation measure, $\zeta$, and average terminal measure, $\zeta_T$, generated by this family of trajectories coincide with $\bar{\mu}$ and $\mu_T$, respectively.
\end{enumerate}
\end{lem}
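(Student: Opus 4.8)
The plan is to establish parts (i)--(iii) in turn; the substance is entirely in part (ii), where a solution of Liouville's Equation must be lifted to an honest probability distribution over trajectories.

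\emph{Part (i).} First I would apply the disintegration theorem for Radon measures on compact metric spaces to $\mu$ along the projection $[0,T]\times X\times U\to[0,T]\times X$, obtaining $d\mu(t,x,u)=d\nu(u\mid t,x)\,d\bar\mu(t,x)$ with $\bar\mu$ the $(t,x)$-marginal and $\nu(\cdot\mid t,x)$ a probability kernel on $U$. To refine $\bar\mu$, I would test Liouville's Equation \eqref{eq:le} against test functions $v$ of $t$ alone: then $\LF v=v'(t)$, so \eqref{eq:le} reads $v(0)+\int_0^T v'(t)\,d\pi(t)=v(T)\langle\mu_T,1\rangle$, where $\pi$ is the $t$-marginal of $\mu$. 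Taking $v\equiv 1$ gives $\langle\mu_T,1\rangle=1$, and then, since $\{v':v\in C^1([0,T])\}=C([0,T])$, that $\pi$ is the Lebesgue measure on $[0,T]$; a second disintegration yields $d\bar\mu(t,x)=d\mu(x\mid t)\,dt$ with $\mu(\cdot\mid t)$ a stochastic kernel on $X$. Finally, testing \eqref{eq:le} against products $v(t,x)=\phi(t)w(x)$ with $w\in C^1(X)$ and integrating $u$ out against $\nu$, I would show $t\mapsto\langle\mu(\cdot\mid t),w\rangle$ has a Lipschitz representative with boundary values $w(x_0)$ at $0$ and $\langle\mu_T,w\rangle$ at $T$; running $w$ through a countable dense subset of $C^1(X)$ and using that every $\mu(\cdot\mid t)$ lives in the fixed compact $X$ gives a narrowly continuous representative of $t\mapsto\mu(\cdot\mid t)$ with $\mu(\cdot\mid 0)=\delta_{x_0}$ and $\mu(\cdot\mid T)=\mu_T$.

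\emph{Part (ii).} I would introduce the averaged (relaxed) field $\tilde F(t,x):=\int_U F(t,x,u)\,d\nu(u\mid t,x)$, which is bounded and Borel measurable off a $\bar\mu$-null set. Because $F$ is control-affine and $U=[-1,1]^m$ is convex, $\tilde F(t,x)=f(t,x)+g(t,x)\int_U u\,d\nu(u\mid t,x)\in f(t,x)+g(t,x)U=F(t,x,U)$, which is also the membership asserted in \eqref{eq:F(t,x,U)}. With the factorization from (i), Liouville's Equation is precisely the weak form of the continuity equation $\partial_t\mu_t+\operatorname{div}_x(\tilde F\mu_t)=0$ on $[0,T]$ with $\mu_0=\delta_{x_0}$, and $\int_0^T\int_X|\tilde F|\,d\mu_t\,dt<\infty$ holds trivially. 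I would then invoke the superposition principle for the continuity equation (e.g.\ Ambrosio--Gigli--Savar\'e, \emph{Gradient Flows in Metric Spaces and in the Space of Probability Measures}, Thm.\ 8.2.1), which needs only boundedness of the field---not continuity or uniqueness of the ODE---to obtain a probability measure $\rho$ on $C([0,T];\R^n)$ concentrated on absolutely continuous curves $\theta$ with $\dot\theta(t)=\tilde F(t,\theta(t))$ for a.e.\ $t$ and $(e_t)_\#\rho=\mu(\cdot\mid t)$ for every $t$, where $e_t(\theta)=\theta(t)$. Unwinding this pushforward is exactly \eqref{eq:rho} (first for continuous $w$, then all measurable $w$ by a monotone-class argument on the compact $X$), and the a.e.\ ODE is \eqref{eq:F(t,x,U)}.

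\emph{Part (iii) and the main obstacle.} From $(e_0)_\#\rho=\delta_{x_0}$ I get $\theta(0)=x_0$ $\rho$-a.s.; from $(e_T)_\#\rho=\mu_T$, that $\theta(T)\in X_T$; and from $(e_t)_\#\rho=\mu(\cdot\mid t)$ supported in $X$ for every $t$, continuity of $\theta$, and a countable dense set of times, that $\theta(t)\in X$ for all $t$. Setting $u(t):=\int_U u\,d\nu(u\mid t,\theta(t))\in U$ produces a measurable $U$-valued control with $\dot\theta=f(t,\theta)+g(t,\theta)u=F(t,\theta,u)$ a.e., so $\rho$-a.e.\ $\theta$ is admissible and $\rho$ may be viewed as an element of $\M_+({\cal X}_T)$. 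The identities $\zeta=\bar\mu$ and $\zeta_T=\mu_T$ then follow by Tonelli: $\zeta(A\times B)=\int_0^T\int_{{\cal X}_T}I_A(t)I_B(\theta(t))\,d\rho\,dt=\int_0^T I_A(t)\,\mu(B\mid t)\,dt=\bar\mu(A\times B)$ using \eqref{eq:rho} with $w=I_B$, and $\zeta_T(B)=\int_{{\cal X}_T}I_B(\theta(T))\,d\rho=\mu_T(B)$. The only genuine difficulty is part (ii): lifting a weak solution of the continuity equation to a measure carried by actual solution trajectories of the merely bounded-measurable field $\tilde F$, which in general admits no unique solutions---everything else is disintegration and Fubini bookkeeping. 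Should one want a self-contained proof rather than a citation, one reproves the superposition principle in the standard way: regularize $\tilde F$, transport the law of Euler polygonal approximants, extract a narrow limit via the uniform Lipschitz-in-time bound on the curves inherited from $\sup|F|<\infty$, and pass to the limit in the defining identities.
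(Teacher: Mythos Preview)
Your proposal is correct; the paper itself does not prove this lemma from scratch but simply invokes \cite[Lemma~3]{henrion2014} in the special case $\mu_0=\delta_{x_0}$, together with the control-affine structure of $F$ to get the inclusion in \eqref{eq:F(t,x,U)}. Your direct appeal to disintegration plus the Ambrosio--Gigli--Savar\'e superposition principle is exactly the machinery underlying that cited result, so what you have written is the unpacked version of the paper's one-line citation.
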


\begin{proof}
Lemma \ref{lem:le2traj} follows from the proof of \cite[Lemma~3]{henrion2014}, applied to the case of $\delta_0 \otimes \mu_0 = \delta_{(0,x_0)}$ (i.e., $\mu_0$ is a Dirac measure at $x=x_0$). Equation \eqref{eq:F(t,x,U)} is due to Equation \eqref{eq:def:F} and \eqref{eq:def:U}. $\rho$ is a probability measure because $\int_{{\cal X}_T}\, d\rho = 1$, which follows from plugging $w(x)=1$ in Equation \eqref{eq:rho}.
\end{proof}

As a result, the support of the measures that satisfy Liouville's Equation \eqref{eq:le} coincide with trajectories that satisfy the differential equation defined in Equation \eqref{eq:F(t,x,U)}. 
Moreover the solutions to Equations \eqref{eq:def:F} and \eqref{eq:F(t,x,U)}, as shown in \cite[Corollary~3.2]{frankowska2000filippov}, are identical:
\begin{rem}
\label{rem:udnu}
For any $\theta(\cdot) \in \spt(\rho)$, Equation \eqref{eq:F(t,x,U)} can be rewritten as:
\begin{equation}
\dot{\theta}(t) = f(t,\theta(t)) + g(t,\theta(t)) \int_U u\, d\nu(u \mid t,\theta(t))
\end{equation}
Since $\nu(\cdot \mid t,x)$ is a stochastic kernel, we have $\int_U u\, d\nu(u \mid t,\theta(t)) \in U$. 
Therefore $(\int_U u\, d\nu(u \mid \cdot,\theta(\cdot)),\theta(\cdot))$ is an admissible pair.
\end{rem}

Lemma \ref{lem:traj2le} and \ref{lem:le2traj} states the correspondence between admissible trajectories and measures that satisfy Liouville's Equation \eqref{eq:le}. 
This correspondence allows us to search for solutions in the space of measures using a convex formulation of the optimal control problem.


  \section{Infinite Dimensional Linear Program }
\label{sec:LP}

This section formulates the $(OCP)$ as an infinite dimensional linear program over the space of measures, proves that it computes the solution to $(OCP)$, and illustrates how its solution can be used for control synthesis. 

Define an infinite dimensional linear program $(P)$ as:
\begin{flalign} 
&& \inf_\Gamma & \phantom{4} \langle \mu, h \rangle + \langle \mu_T, H \rangle && (P) \nonumber \\
&& \text{s.t.} & \phantom{4} \delta_{(0,x_0)} + \LF' \mu  = \delta_T \otimes \mu_T, \nonumber \\
&&& \phantom{4} \LF' \mu = \Lf' \gamma + \Lg'( \sigma_+ - \sigma_- ), \nonumber \\
&&& \phantom{4} [\sigma_+]_j + [\sigma_-]_j + [\hat{\sigma}]_j = \gamma &&\forall j \in \{1,\cdots,m\}, \nonumber \\
&&& \phantom{4} \gamma = \pi_*^{(t,x)} \mu, \nonumber\\
&&& \phantom{4} \mu, \gamma, \mu_T \geq 0, \nonumber\\
&&& \phantom{4} [\sigma_+]_j, [\sigma_-]_j, [\hat{\sigma}]_j \geq 0 && \forall j \in \{1,\cdots,m\}, \nonumber
\end{flalign}
where the infimum is taken over a tuple of measures $\Gamma = (\mu, \mu_T, \gamma, \sigma_+, \sigma_-, \hat{\sigma}) \in \M_+([0,T]\times X \times U) \times \M_+(X_T) \times \M_+([0,T] \times X) \times \left(\M_+([0,T] \times X)\right)^m \times \left(\M_+([0,T] \times X)\right)^m \times \left(\M_+([0,T] \times X)\right)^m$.
The dual to problem $(P)$ is given as:
\begin{flalign} 
&& \sup_\Delta & \phantom{4} v(0,x_0) && (D) \nonumber \\
&& \text{s.t.} & \phantom{4} \LF v + \LF w + q + h \geq 0, \nonumber \\
&&& \phantom{4} v(T,x) \leq H(x), && \forall x \in X_T \nonumber \\
&&& \phantom{4} \Lf w + \sum_{j=1}^m p_j + q \leq 0, \nonumber\\
&&& \phantom{4} \left| \left[\Lg w\right]_j \right| \leq p_j && \forall j \in \{1,\cdots, m\}, \nonumber\\
&&& \phantom{4} p_j \geq 0 && \forall  j \in \{1,\cdots,m\}. \nonumber
\end{flalign}
where the supremum is taken over a tuple of functions $\Delta = (v,w,p,q) \in C^1([0,T]\times X) \times C^1([0,T]\times X) \times \left(C([0,T]\times X)\right)^{m} \times C([0,T]\times X)$.

Next, we have the following useful result:
\begin{thm}
There is no duality gap between $(P)$ and $(D)$.
\end{thm}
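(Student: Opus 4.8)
The plan is to recognize the pair $(P)$--$(D)$ as an instance of a conic linear program over a dual pair of spaces and to apply the standard infinite-dimensional linear programming duality machinery used for occupation-measure programs (cf.\ \cite{henrion2008nonlinear,lasserre2008}). One pairs the tuple of finite signed Radon measures in which $\Gamma$ ranges with the matching tuple of continuous ($C$ or $C^1$) test functions, gives the measure side its weak-$*$ topology, collects the four families of equality constraints of $(P)$ into a single linear map $\mathcal{A}\Gamma = b$ with $b = (\delta_{(0,x_0)},0,0,0)$, and lets $K$ denote the weak-$*$ closed convex cone imposing the nonnegativity requirements. A direct computation --- using that $\LF,\Lf,\Lg$ are adjoint to $\LF',\Lf',\Lg'$ and that $\langle \pi_*^{(t,x)}\mu, q\rangle = \langle\mu,q\rangle$ --- shows that $(D)$ is exactly the Lagrangian dual obtained by pairing the constraints of $(P)$ with the test tuple $(v,w,(p_j),q)$, so weak duality $\mathrm{val}(D)\le\mathrm{val}(P)$ holds for free; what remains is (a) consistency of $(P)$, (b) finiteness of $\mathrm{val}(P)$, and (c) a closedness/compactness property that upgrades weak duality to equality.

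For (a), take any admissible pair $(x,u)\in\mathcal{P}_T$ --- which exists whenever the theorem is non-trivial --- and set $\mu = \mu(\cdot\mid x,u)$, $\mu_T = \mu_T(\cdot\mid x)$, $\gamma = \pi_*^{(t,x)}\mu$; split each control coordinate $u_j = u_j^+ - u_j^-$ with $u_j^\pm\ge 0$ and $u_j^+ + u_j^- = |u_j|\le 1$; let $[\sigma_+]_j,[\sigma_-]_j$ be the pushforwards of $u_j^\pm(t)\,dt$ under $t\mapsto(t,x(t))$ and $[\hat\sigma]_j = \gamma - [\sigma_+]_j - [\sigma_-]_j \ge 0$. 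Using $[\Lg v]_j = \sum_k (\partial v/\partial x_k)\,[g]_{kj}$ one checks $\LF'\mu = \Lf'\gamma + \Lg'(\sigma_+ - \sigma_-)$, the slack identities hold by construction, and Liouville's equation holds by Lemma~\ref{lem:traj2le}, so this $\Gamma$ is feasible for $(P)$. For (b), testing Liouville's constraint against $v\equiv 1$ and against $v(t,x) = t$ (so that $\LF v = 0$ and $\LF v = 1$, respectively) forces, for \emph{every} feasible $\Gamma$, $\langle\mu_T,1\rangle = 1$ and $\langle\mu,1\rangle = T$; the marginalization constraint then gives $\langle\gamma,1\rangle = T$ and the slack identities together with nonnegativity give $\langle[\sigma_\pm]_j,1\rangle,\langle[\hat\sigma]_j,1\rangle\le T$. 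Since $h$ and $H$ are bounded on the compact sets over which they are integrated, $\langle\mu,h\rangle + \langle\mu_T,H\rangle$ is bounded below on the feasible set, so $\mathrm{val}(P)$ is finite.

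For (c), the uniform mass bounds confine the feasible set of $(P)$ to a product of balls in the respective measure spaces, each weak-$*$ compact by the Banach--Alaoglu theorem (using compactness of $[0,T]$, $X$, $X_T$, $U$); every constraint of $(P)$ is weak-$*$ closed, since $\mu\mapsto\langle\mu,\LF v\rangle$ and the analogous maps for $\Lf',\Lg',\pi_*^{(t,x)}$ are continuous for the weak-$*$ topologies ($\LF v\in C$ for $v\in C^1$, etc.)\ and the cones $\M_+$ are weak-$*$ closed, so the feasible set is weak-$*$ compact and the weak-$*$ lower semicontinuous objective attains its minimum. To obtain no gap I would show the primal value function $V(z) = \inf\{\langle\mu,h\rangle + \langle\mu_T,H\rangle : \mathcal{A}\Gamma = b + z,\ \Gamma\in K\}$ is weak-$*$ lower semicontinuous at $z = 0$: if $z_\alpha\to 0$ with $V(z_\alpha)\to\ell$, the mass bounds of step (b) persist under the perturbation (they arise from pairing $z_\alpha$ against the fixed test functions $1$ and $t$), so near-minimizing $\Gamma_\alpha$ eventually lie in one fixed weak-$*$ compact set; a convergent subnet has a limit $\Gamma^\star\in K$ with $\mathcal{A}\Gamma^\star = b$ (by continuity of $\mathcal{A}$) and objective value $\le\ell$, forcing $\mathrm{val}(P)\le\ell$. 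Hence $V$ is lower semicontinuous at $0$, which with weak duality yields $\mathrm{val}(P) = \mathrm{val}(D)$; equivalently, one may invoke an Anderson--Nash-type strong-duality theorem, whose closedness hypothesis is precisely what these a priori mass bounds supply. I expect the only real difficulty to be the topological bookkeeping in (c): verifying $\mathcal{A}$ is continuous from the weak-$*$ topology to the weak-$*$ topology on the correct product of (pre)dual spaces, handling $\hat\sigma$ and the marginalization constraint inside that pairing, and ensuring the unbounded cost direction in $K$ does not destroy closedness --- which is exactly why the mass bounds of step (b) carry the argument.
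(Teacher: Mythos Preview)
Your proposal is correct and is essentially the same approach as the paper's: the paper's entire proof is the single line ``The proof follows from \cite[Theorem~3.10]{anderson1987},'' i.e.\ an appeal to the Anderson--Nash strong-duality theorem for infinite-dimensional linear programs, and your write-up is precisely a verification of the hypotheses (consistency, finite value, weak-$*$ closedness/compactness via the a priori mass bounds obtained by testing Liouville's equation against $v\equiv 1$ and $v(t,x)=t$) that one needs in order to invoke that theorem. In other words, you have spelled out what the paper leaves implicit in its citation.
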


\begin{proof}
The proof follows from \cite[Theorem~3.10]{anderson1987}.
\end{proof}

Next, we show that $(P)$ solves $(OCP)$. 
We do this by first introducing another optimization problem $(Q)$ that solves $(OCP)$, and then showing that $(Q)$ is equivalent as $(P)$.
Define optimization problem $(Q)$ as
\begin{flalign} 
&& \inf_\Theta & \phantom{4} \langle \eta, h \rangle + \langle \eta_T, H \rangle && (Q) \nonumber \\
&& \text{s.t.} & \phantom{4} \delta_{(0,x_0)} + \LF' \eta = \delta_T \otimes \eta_T \nonumber \\
&&& \phantom{4} \eta^i, \eta_T^i \geq 0 \nonumber
\end{flalign}
where the inifimum is taken over a tuple of measures $\Theta = (\eta, \eta_T) \in \M_+([0,T]\times X \times U) \times \M_+(X_T)$.

Note that $(Q)$ is identical to the primal LP defined in Equation $(2.13)$ in \cite{lasserre2008}, which was shown to compute the optimal value of $(OCP)$ under the following assumption:
\begin{assum}
$h(t,x,\cdot)$ is convex for any $(t,x) \in [0,T]\times X$.
\end{assum}
Unfortunately, control synthesis was not amenable using $(Q)$. 
In contrast our formulation, as we describe next, makes control synthesis feasible under the following additional assumption:
\begin{assum}
\label{assum:unique}
If $(OCP)$ is feasible, then the optimal admissible pair $(x^*,u^*)$ is unique $dt$-almost everywhere.
\end{assum}
\begin{rem}
Notice that Assumption \ref{assum:unique} and Equation \eqref{eq:opt_cost} imply $J^* = J(x^*,u^*)$.
\end{rem}

Next, we prove several important properties about $(Q)$:
\begin{lem}
\label{lem:Q}
If $(Q)$ is feasible, then
\begin{enumerate}[(i)]
	\item the minimum to $(Q)$, $q^*$, is attained,
	\item $q^* = J^* = J(x^*,u^*)$, where $(x^*,u^*)$ is the optimal admissible pair to $(OCP)$, 
    \item if $(\eta^*, \eta^*_T)$ is an optimal solution to $(Q)$, we can disintegrate $\eta^*$ as
    \begin{equation}
    \label{eq:decomp}
    \begin{aligned}
    d\eta^*(t,x,u) &= d\nu^*(u \mid t,x)\, d\bar{\eta}^*(t,x) \\
                   &= d\nu^*(u \mid t,x)\, d\eta^*(x \mid t)\, dt,
    \end{aligned}
    \end{equation}
    then $\bar{\eta}^*$ coincides with the occupation measures associated with $x^*$ almost everywhere,
    \item for almost every point $(t,x)$ in the support of $\bar{\eta}^*$, we have
    \begin{equation}
    \label{eq:lemmaQ41}
    F(t,x,u^*(t)) = \int_U F(t,x,u)\, d\nu^*(u \mid t,x),
    \end{equation}
    and if every column of $g$ is nonzero almost everywhere along the optimal trajectory trajectory $x^*$, then
    \begin{equation}
    \label{eq:lemmaQ42}
    u^*(t) = \int_U u \, d\nu^*(u \mid t,x).
    \end{equation}
    $\bar{\eta}^*$-almost everywhere.
\end{enumerate}
\end{lem}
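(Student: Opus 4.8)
<br>

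The plan is to prove the four claims in sequence, since each leans on the previous one, and to reduce the bulk of the work to the known correspondence results of Lemmas~\ref{lem:traj2le} and~\ref{lem:le2traj} together with the fact (cited from \cite{lasserre2008}) that $(Q)$ computes the value of $(OCP)$ under Assumption~3. For part~(i), I would argue attainment by a standard weak-$*$ compactness argument: any feasible $(\eta,\eta_T)$ has total mass controlled by the time horizon $T$ and the compactness of $X$, $X_T$, $U$ (the Liouville constraint tested against $v\equiv 1$ forces $\eta_T$ to be a probability measure and $\bar\eta$ to have mass exactly $T$), so a minimizing sequence lives in a fixed weak-$*$ compact set; the constraint map is weak-$*$ continuous (the test functions $\LF v$ are continuous on the compact set $[0,T]\times X\times U$) and the objective is weak-$*$ continuous since $h,H$ can be taken continuous, hence the infimum is achieved. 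For part~(ii), I would invoke the cited equivalence $q^* = J^*$ from \cite[Eq.~(2.13)]{lasserre2008}, and then use Assumption~\ref{assum:unique} together with the Remark immediately preceding the lemma to write $J^* = J(x^*,u^*)$.

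For part~(iii), the idea is to take the optimal $(\eta^*,\eta^*_T)$, which by part~(i) exists, and apply Lemma~\ref{lem:le2traj}: disintegrate $\eta^* = \nu^*(u\mid t,x)\,\bar\eta^*(t,x)$, obtain the probability measure $\rho$ on admissible trajectories $\theta(\cdot)$ solving \eqref{eq:F(t,x,U)}, and note that $\bar\eta^*$ equals the average occupation measure $\zeta$ of this family. Then the pair $(\theta, u_\theta)$ with $u_\theta = \int_U u\,d\nu^*(u\mid\cdot,\theta(\cdot))$ is admissible by Remark~\ref{rem:udnu}, so by linearity of the cost in the measures, the objective of $(Q)$ at $(\eta^*,\eta^*_T)$ equals $\int_{{\cal X}_T} J(\theta,u_\theta)\,d\rho \ge J^* = q^*$. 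Since $(\eta^*,\eta^*_T)$ is optimal this must be an equality, forcing $J(\theta,u_\theta) = J^*$ for $\rho$-a.e.\ $\theta$; by Assumption~\ref{assum:unique} every such $(\theta,u_\theta)$ coincides $dt$-a.e.\ with $(x^*,u^*)$, so $\rho$ is (essentially) the Dirac mass at $x^*$ and $\bar\eta^*$ equals the occupation measure of $x^*$ a.e. The main obstacle I anticipate is exactly this step: turning the integral equality $\int J\,d\rho = J^*$ into an a.e.\ pointwise statement and then correctly transferring "$\rho$-a.e.\ trajectory equals $x^*$" into "$\bar\eta^*$ is the occupation measure of $x^*$," being careful about the $dt$-a.e.\ qualifier in Assumption~\ref{assum:unique} versus the $\bar\eta^*$-a.e.\ qualifier in the statement.

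For part~(iv), Equation~\eqref{eq:lemmaQ41} is then immediate: by part~(iii), $\bar\eta^*$-a.e.\ point $(t,x)$ lies on the graph of $x^*$, i.e.\ $x = x^*(t)$, and $x^*$ solves $\dot x^* = F(t,x^*,u^*)$ a.e.; meanwhile the definition of the disintegration together with \eqref{eq:F(t,x,U)} gives $\int_U F(t,x,u)\,d\nu^*(u\mid t,x) = \dot\theta(t) = \dot x^*(t) = F(t,x^*(t),u^*(t)) = F(t,x,u^*(t))$ for a.e.\ such $(t,x)$. To get \eqref{eq:lemmaQ42}, I would subtract: $F(t,x,u^*(t)) - \int_U F(t,x,u)\,d\nu^* = g(t,x)\bigl(u^*(t) - \int_U u\,d\nu^*(u\mid t,x)\bigr) = 0$ using the control-affine structure \eqref{eq:def:F}; if every column of $g$ is nonzero $\bar\eta^*$-a.e.\ along $x^*$ — I'd check whether "nonzero columns" suffices to conclude the vector in the kernel vanishes, possibly this needs linear independence of the columns, which I would flag — then each component $[u^*(t)]_j - \int_U [u]_j\,d\nu^*$ must vanish, giving \eqref{eq:lemmaQ42} $\bar\eta^*$-a.e. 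This last algebraic step is routine modulo the precise rank hypothesis on $g$, which is the one place I would return to the statement to make sure the condition as written is exactly what the argument consumes.
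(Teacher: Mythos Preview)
Your approach mirrors the paper's almost exactly: for (i)--(ii) the paper simply cites \cite[Theorem~2.3]{lasserre2008} rather than redoing the weak-$*$ compactness argument, and for (iii)--(iv) it follows the same disintegrate--superpose--uniqueness route through Lemma~\ref{lem:le2traj} and Assumption~\ref{assum:unique}.

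There is one genuine gap in your part~(iii). You write that ``by linearity of the cost in the measures, the objective of $(Q)$ at $(\eta^*,\eta^*_T)$ \emph{equals} $\int_{{\cal X}_T} J(\theta,u_\theta)\,d\rho$.'' Linearity in the measures only gives you
\[
\langle\eta^*,h\rangle \;=\; \int_{[0,T]\times X}\Bigl(\int_U h(t,x,u)\,d\nu^*(u\mid t,x)\Bigr)\,d\bar\eta^*(t,x),
\]
and to replace the inner integral by $h\bigl(t,x,\int_U u\,d\nu^*\bigr)=h(t,x,u_\theta)$ you need Jensen's inequality, i.e.\ the convexity of $h(t,x,\cdot)$ (the assumption stated just before Assumption~\ref{assum:unique}); this produces only $\geq$, not equality. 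The paper makes exactly this move in the chain \eqref{arg:hconvex}--\eqref{arg:admissible_pair}. Fortunately the inequality points the right way: $q^* \geq \int J(\theta,u_\theta)\,d\rho \geq J^* = q^*$ still forces $J(\theta,u_\theta)=J^*$ for $\rho$-a.e.\ $\theta$, so your conclusion survives once you invoke convexity of $h$ rather than linearity.

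Your hesitation in (iv) about whether ``nonzero columns of $g$'' is enough to cancel $g$ from $g(t,x)\bigl(u^*(t)-\int_U u\,d\nu^*\bigr)=0$ is well founded; the paper's proof is equally loose at this step.
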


\begin{proof}
\hfill
\begin{enumerate}[(i)]
\item This follows from \cite[Theorem~2.3(i)]{lasserre2008}.
\item This follows from \cite[Theorem~2.3(iii)]{lasserre2008} by noting $F(t,x,u) = f(t,x) + g(t,x)u$ and $h(t,x,\cdot)$ is convex.
\item We first show that $\bar{\eta}^*$ coincides with the occupation measure of a family of trajectories, and then argue that any of the trajectories together with some control will achieve optimal cost. The result then follows by noting the optimal admissible pair is unique almost everywhere.

Since $(\eta^*,\eta_T^*)$ is optimal therefore feasible, it satisfies Liouville's Equation \eqref{eq:le}. Thus $\eta^*$ can be disintegrated according to Lemma \ref{lem:le2traj}$(i)$. By Lemma \ref{lem:le2traj}$(ii)$ and $(iii)$, there exists a probability measure $\rho \in \M_+({\cal X}_T)$ such that $\bar{\eta}^*$ coincides with the occupation measures of a family of admissible trajectories in the support of $\rho$. We only need to show all the trajectories in that family are equal to $x^*(t)$ $dt$-almost everywhere.

Define $\hat{u}(t,x) = \int_U u\, d\nu(u \mid t,x)$, we have
\begin{align}
J^* = &\int_{[0,T]\times X\times U} h(t,x,u)\, d\eta^*(t,x,u) + \int_{X_T} H(x)\, d\eta^*_T(x) \nonumber\\
=& \int_{[0,T]\times X\times U} h(t,x,u)\, d\nu^*(u \mid t,x)\, d\eta^*(x \mid t)\, dt + \int_{X_T} H(x)\, d\eta^*_T(x) \nonumber\\
\geq& \int_{[0,T]} \int_X h(t,x,\hat{u}(t,x))\, d\eta^*(x \mid t)\, dt + \int_{X_T} H(x)\, d\eta^*_T(x) \label{arg:hconvex}\\
=& \int_{[0,T]} \int_{{\cal X}_T} h(t,\theta(t), \hat{u}(t,\theta(t)))\, d\rho(\theta(\cdot))\, dt + \int_{{\cal X}_T} H(\theta(T))\, d\rho(\theta(\cdot)) \label{arg:rho}\\
=& \int_{{\cal X}_T} \left[ \int_{[0,T]} h(t,\theta(t), \hat{u}(t,\theta(t)))\, dt + H(\theta(T)) \right]\, d\rho(\theta(\cdot)) \label{arg:fubini}\\
=& \int_{{\cal X}_T} J \left(\theta(\cdot), \hat{u}(\cdot, \theta(\cdot)) \right) d\rho(\theta(\cdot)) \label{arg:admissible_pair}
\end{align}
where \eqref{arg:hconvex} is obtained from the convexity of $h(t,x,\cdot)$ and the fact that $\nu^*(\cdot \mid t,x)$ is a probability measure; \eqref{arg:rho} is from Lemma \ref{lem:le2traj}; \eqref{arg:fubini} is from Fubini's Theorem; \eqref{arg:admissible_pair} is because $(\theta(\cdot), \hat{u}(\cdot, \theta(\cdot)))$ is an admissible pair.

Since $\rho$ is probability measure, every admissible pair $(\theta(\cdot),\hat{u}(\cdot,\theta(\cdot)))$ with $\theta(\cdot)$ in the support of $\rho$ must be optimal. Since $x^*$ is assumed to be unique $dt$-almost everywhere, we have
\begin{equation}
\label{eq:gamma2xstar}
\theta(t) = x^*(t)\text{ a.e.}\quad \forall \theta(\cdot) \in \spt(\rho)
\end{equation}

According to Lemma \ref{lem:le2traj}, $\bar{\eta}^*$ coincides with the occupation measure of the family of admissible trajectories in $\spt(\rho)$. Note the similarity between Equation \eqref{eq:def:mu|x} and \eqref{eq:def:zeta}, therefore 
$\bar{\eta}^*$ coincides with the occupation measure of $x^*$ almost everywhere.
\item
From Lemma \ref{lem:le2traj} and the proof of $(iii)$, we know for any trajectory $\theta(\cdot)$ in the support of $\rho$,
\begin{equation}
\dot{\theta}(t) = \int_U F(t,\theta(t),u)\, d\nu^*(u \mid t,\theta(t)) \text{ a.e.}
\end{equation}
Using Equation \eqref{eq:gamma2xstar} and the fact that
\begin{equation}
\dot{x}^*(t) = F(t,x^*(t),u^*(t)) \text{ a.e.},
\end{equation}
Equation \eqref{eq:lemmaQ41} follows.

Since $F(t,x,u) = f(t,x) + g(t,x)u$, we know
\begin{equation}
g(t,x) u^*(t) = g(t,x) \int_U u\, d\nu^*(u \mid t,x) \quad \bar{\eta}^* \text{-a.e.}
\end{equation}
Since $g(t,x^*(t))$ is nonzero almost everywhere and therefore $g(t,x)$ is nonzero $\bar{\eta}^*$-almost everywhere, Equation \eqref{eq:lemmaQ42} follows.
\end{enumerate}
\end{proof}

The previous result ensures that $(Q)$ can be solved to find a solution to $(OCP)$ in a convex manner; however, control synthesis via this formulation is still not amenable. 
The next pair of results ensure that $(P)$ can be used to solve $(Q)$ and perform control extraction:
\begin{thm}
\label{thm:P}
\hfill
\begin{enumerate}[(i)]
\item $(P)$ is feasible if and only if $(Q)$ is feasible. 
Furthermore, if $(P)$ is feasible the minimum to $(P)$, $p^*$, is attained, and $p^* = J^*$.
\item If $(P)$ is feasible, then let $(\mu^*,\mu_T^*, \gamma^*, \sigma_+^*, \sigma_-^*, \hat{\sigma}^*)$ be a minimizer of $(P)$, then $\gamma^*$ coincides with the occupation measure of $x^*$ almost everywhere.
\end{enumerate}
\end{thm}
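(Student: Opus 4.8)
The plan is to prove the two parts essentially separately, though part (ii) will lean on the structure uncovered in part (i). For part (i), I would first establish the easy direction: if $(P)$ is feasible with some tuple $\Gamma = (\mu,\mu_T,\gamma,\sigma_+,\sigma_-,\hat\sigma)$, then $(\mu,\mu_T)$ already satisfies Liouville's equation (the first constraint of $(P)$) and the support/nonnegativity conditions, so $(\eta,\eta_T) = (\mu,\mu_T)$ is feasible for $(Q)$; moreover the objectives agree, so $p^* \geq q^*$. The reverse direction is the substantive one: given a feasible (indeed, by Lemma \ref{lem:Q}(i), optimal) $(\eta^*,\eta^*_T)$ for $(Q)$, I must manufacture the auxiliary measures $\gamma,\sigma_+,\sigma_-,\hat\sigma$ so that the remaining constraints of $(P)$ hold. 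The natural choice is $\mu = \eta^*$, $\mu_T = \eta^*_T$, $\gamma = \pi^{(t,x)}_* \eta^*$ (forced by the penultimate constraint). Then the second constraint $\LF'\mu = \Lf'\gamma + \Lg'(\sigma_+ - \sigma_-)$ must be checked: expanding $\LF v = \Lf v + \sum_j [\Lg v]_j u_j$ and integrating against $\mu$, the $f$-part integrates to $\langle \gamma, \Lf v\rangle$ since $\Lf v$ depends only on $(t,x)$, and the $g$-part becomes $\sum_j \langle \mu, [\Lg v]_j\, u_j\rangle$; writing $u_j = u_j^+ - u_j^-$ (positive/negative parts, both in $[0,1]$ on $U=[-1,1]^m$) suggests defining $[\sigma_\pm]_j$ as the pushforward under $(t,x,u)\mapsto(t,x)$ of the measure $u_j^\pm \, d\mu$, and $[\hat\sigma]_j$ as the pushforward of $(1-u_j^+-u_j^-)\,d\mu = (1-|u_j|)\,d\mu$, which is nonnegative precisely because $|u_j|\le 1$. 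One then verifies $[\sigma_+]_j + [\sigma_-]_j + [\hat\sigma]_j = \pi^{(t,x)}_*\mu = \gamma$ and that $\LF'\mu - \Lf'\gamma = \Lg'(\sigma_+-\sigma_-)$ by testing against arbitrary $v\in C^1$. This shows $(Q)$ feasible implies $(P)$ feasible with equal objective value, giving $p^* \leq q^*$; combined with the other inequality and Lemma \ref{lem:Q}, $p^* = q^* = J^*$, and attainment of the minimum transfers along the same construction.

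For part (ii), let $(\mu^*,\mu_T^*,\gamma^*,\sigma_+^*,\sigma_-^*,\hat\sigma^*)$ minimize $(P)$. Since $p^* = q^*$ and the construction in part (i) shows the objective of $(P)$ at this tuple equals $\langle\mu^*,h\rangle + \langle\mu_T^*,H\rangle$, the pair $(\mu^*,\mu_T^*)$ is feasible for $(Q)$ with value $q^*$, hence optimal for $(Q)$. Now I invoke Lemma \ref{lem:Q}(iii): disintegrating $\mu^*$, its $(t,x)$-marginal $\bar\mu^*$ coincides almost everywhere with the occupation measure of $x^*$. But the constraint $\gamma^* = \pi^{(t,x)}_*\mu^* = \bar\mu^*$ identifies $\gamma^*$ with exactly this marginal, so $\gamma^*$ coincides with the occupation measure of $x^*$ almost everywhere, which is the claim.

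The main obstacle I anticipate is the bookkeeping in part (i)'s hard direction — specifically, verifying that the measures $\sigma_\pm^*, \hat\sigma^*$ built from the positive/negative parts of the coordinates of $u$ genuinely satisfy the operator identity $\LF'\mu = \Lf'\gamma + \Lg'(\sigma_+-\sigma_-)$ as an equality of elements of $C^1([0,T]\times X)'$, and that the adjoint $\Lg'$ is being applied correctly to an $m$-tuple of measures. One must be careful that $\Lg v$ is vector-valued and that the pairing $\langle \sigma_+ - \sigma_-, \Lg v\rangle = \sum_j \langle [\sigma_+]_j - [\sigma_-]_j, [\Lg v]_j\rangle$; the decomposition $u_j = u_j^+ - u_j^-$ with $u_j^+ + u_j^- = |u_j| \le 1$ is what makes everything consistent, but writing this cleanly against test functions is where the proof could get delicate. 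A secondary subtlety is confirming that Assumption \ref{assum:unique} is not needed for part (i) (feasibility and value) but is used implicitly through Lemma \ref{lem:Q}(iii) in part (ii).
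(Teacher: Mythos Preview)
Your proposal is correct, and for part (ii) it matches the paper's argument exactly. For the hard direction of part (i), however, you take a genuinely different route than the paper.

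The paper constructs the auxiliary measures by first invoking Lemma~\ref{lem:Q}(ii)--(iv) to get the optimal control $u^*$ of $(OCP)$, then defines a signed measure $\sigma$ by the density $d\sigma(t,x) = u^*(t)\,d\gamma(t,x)$ (via the Riesz representation theorem), and finally applies the Hahn--Jordan decomposition $\sigma = \sigma_+ - \sigma_-$ to produce the nonnegative pieces. Verifying the constraint $\LF'\mu = \Lf'\gamma + \Lg'(\sigma_+-\sigma_-)$ then requires the identity $\int_U u\,d\nu^*(u\mid t,x) = u^*(t)$ from Lemma~\ref{lem:Q}(iv), which in turn rests on Assumption~\ref{assum:unique} and on the columns of $g$ being nonzero along $x^*$. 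Your construction instead defines $[\sigma_\pm]_j$ directly as the $(t,x)$-pushforwards of $u_j^\pm\,d\mu$ and $[\hat\sigma]_j$ as the pushforward of $(1-|u_j|)\,d\mu$; the operator identity and the summation constraint then fall out by one-line computations against test functions, with no appeal to $u^*$, Hahn--Jordan, or Lemma~\ref{lem:Q}(iv). This is more elementary and, as you correctly note, makes part (i) independent of Assumption~\ref{assum:unique}. The paper's route has the conceptual advantage that the constructed $\sigma_\pm$ already encode $u^*$ through the density relation $d\sigma = u^*\,d\gamma$, foreshadowing the control-extraction formula of Theorem~\ref{thm:lp}; your decomposition yields the same signed measure $\sigma_+-\sigma_-$ but by a more direct path.
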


\begin{proof}
\hfill
\begin{enumerate}[(i)]
\item Given any feasible point $(\mu,\mu_T, \gamma, \sigma_+, \sigma_-, \hat{\sigma})$ of $(P)$, clearly $(\mu,\mu_T)$ will also be feasible in $(Q)$ with the same cost. Therefore $(Q)$ is feasible and $p^* \geq q^*$, where $q^*$ is the minimum to $(Q)$. Since $q^* = J^*$, we only need to show there is a feasible point in $(P)$ that achieves $q^*$ as the cost.

To simplify the notations, we assume the number of inputs $m = 1$. The case of $m>1$ can be proved by a similar argument. Let $(\eta^*, \eta^*_T)$ be the optimal solution of $(Q)$, then the Liouville's Equation \eqref{eq:le} is automatically satisfied. Again, we disintegrate $\eta^*$ as in Equation \eqref{eq:decomp}, and define $\gamma := \pi_*^{(t,x)}\eta^* = \bar{\eta}^*$. Since $u^*$ is assumed to be measurable, according to Riesz representation theorem, there exists signed measures $\sigma \in \M([0,T]\times X)$ such that
\begin{equation}
\label{eq:sigma}
d \sigma(t,x) = u^*(t) \, d \gamma(t,x)
\end{equation}
Furthermore, since $u^*(t) \in [-1,1]$, we have $-\gamma \leq \sigma \leq \gamma$ on support of $\gamma$. Then we use Hahn-Jordan decomposition to express $\sigma(t,x)$ as
\begin{equation}
\label{eq:hahn}
\sigma = \sigma_+ - \sigma_-
\end{equation}
where $\sigma_+, \sigma_- \in \M_+([0,T]\times X)$ and $\sigma_+ + \sigma_- \leq \gamma$. Thus there exists a measure $\hat{\sigma} \in \M_+([0,T]\times X)$ such that
\begin{equation}
\label{eq:sigma_hat}
\sigma_+ + \sigma_- + \hat{\sigma} = \gamma
\end{equation}
For any function $v \in C^1([0,T]\times X)$, we have
\begin{equation}
\label{eq:L2Lf}
\begin{aligned}
\int_{[0,T]\times X \times U} \LF v \, d\eta^* =& \int_{[0,T]\times X \times U} \Lf v \, d\eta^* + \int_{[0,T]\times X \times U} (\Lg v) \cdot u \, d\nu^*(u \mid t,x) \, d\gamma \\
=& \int_{[0,T]\times X} \Lf v \, \int_U d\eta^* + \int_{[0,T]\times X} \Lg v \left[ \int_U u \, d\nu^*(u\mid t,x)\right] d\gamma\\
=& \int_{[0,T]\times X} \Lf v \, d(\pi_*^{(t,x)} \eta^*)+ \int_{[0,T]\times X} (\Lg v) u^*(t) \, d\gamma\\
=& \int_{[0,T]\times X} \Lf v \, d\gamma + \int_{[0,T]\times X} \Lg v \, d(\sigma_+ - \sigma_-)
\end{aligned}
\end{equation}
Now that we've shown $(\eta^*, \eta_T^*, \gamma, \sigma_+, \sigma_-, \hat{\sigma})$ satisfy all the constraints in $(P)$ and achieves the cost $q^*$, it follows that $p^* = q^*$.

\item This follows from Lemma \ref{lem:Q}$(iii)$ and the proof of $(i)$.

\end{enumerate}
\end{proof}

Finally we describe how to perform control synthesis with the solution to $(P)$:
\begin{thm}
\label{thm:lp}
Suppose $(P)$ is feasible and let $\Gamma^* = (\mu^*, \mu_T^*, \gamma^*, \sigma_+^*, \sigma_-^*, \hat{\sigma}^*)$ be the vector of measures that achieves the infimum of $(P)$, then there exists a control law, $\tilde{u} \in L^1([0,T]\times \mathcal{D}, U)$, such that 
\begin{equation}
\label{eq:radon}
\int_{A\times B} [\tilde{u}(t,x)]_j \, d\gamma^*(t,x) = \int_{A \times B} d[\sigma_+^* - \sigma_-^*]_j(t,x)
\end{equation}
for all subsets $A \times B$ in the Borel $\sigma$-algebra of $[0,T]\times X$ and for each $j \in \{1, \cdots , m\}$. If moreover $(x^*,u^*)$ are optimal solutions to $(OCP)$ and columns of $g(t,x^*(t))$ are nonzero almost everywhere along the optimal trajectory $x^*(t)$ (e.g. it is sufficient for any element in each column of $g$ to be nonzero almost everywhere), then $\tilde{u}$ and $u^*$ are equal $\gamma^*$-almost everywhere.
\end{thm}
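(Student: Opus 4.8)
The plan is to handle this in two stages: first construct $\tilde u$ from a Radon--Nikodym argument using only the feasibility constraints of $(P)$, and then, under the two extra hypotheses, identify $\tilde u$ with $u^*$ by combining Theorem~\ref{thm:P}(ii), Lemma~\ref{lem:Q}(iv), and the second constraint of $(P)$.

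For the existence of $\tilde u$ I would argue componentwise. Fix $j$. The constraint $[\sigma_+^*]_j+[\sigma_-^*]_j+[\hat\sigma^*]_j=\gamma^*$ together with $[\hat\sigma^*]_j\ge 0$ forces $[\sigma_+^*]_j+[\sigma_-^*]_j\le\gamma^*$, so that $\bigl|[\sigma_+^*-\sigma_-^*]_j\bigr|\le[\sigma_+^*]_j+[\sigma_-^*]_j\le\gamma^*$ as measures. In particular $[\sigma_+^*-\sigma_-^*]_j\ll\gamma^*$, hence it admits a Radon--Nikodym derivative $[\tilde u(t,x)]_j:=\tfrac{d[\sigma_+^*-\sigma_-^*]_j}{d\gamma^*}(t,x)$, and the same bound gives $\bigl|[\tilde u(t,x)]_j\bigr|\le1$ for $\gamma^*$-a.e.\ $(t,x)$. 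Assembling the $m$ components, and redefining $\tilde u$ on a $\gamma^*$-null set if necessary, we get $\tilde u(t,x)\in U$ everywhere; $\tilde u$ is bounded and $\gamma^*$ is finite, so $\tilde u$ lies in the required space $L^1([0,T]\times\mathcal D,U)$; and equation \eqref{eq:radon} is precisely the defining property of the Radon--Nikodym derivative, first for indicator integrands and then for all bounded measurable ones.

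For the identification I would first observe that if $\Gamma^*$ minimizes $(P)$ then $(\mu^*,\mu_T^*)$ is feasible for $(Q)$ with value $p^*=J^*=q^*$, hence optimal for $(Q)$; Theorem~\ref{thm:P}(ii) then gives that $\gamma^*=\pi_*^{(t,x)}\mu^*$ coincides with the occupation measure of $x^*$, i.e.\ $\gamma^*$ is the push-forward of Lebesgue measure on $[0,T]$ under $t\mapsto(t,x^*(t))$, so $\int\phi\,d\gamma^*=\int_0^T\phi(t,x^*(t))\,dt$ for continuous $\phi$. Disintegrating $\mu^*$ as $d\mu^*=d\nu^*(u\mid t,x)\,d\gamma^*(t,x)$ (legitimate because $\pi_*^{(t,x)}\mu^*=\gamma^*$ is itself a constraint of $(P)$) and writing $\hat u(t,x):=\int_U u\,d\nu^*(u\mid t,x)$, Lemma~\ref{lem:Q}(iv) and the non-vanishing-columns hypothesis give $\hat u=u^*$ $\gamma^*$-a.e. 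Next I would test the constraint $\LF'\mu^*=\Lf'\gamma^*+\Lg'(\sigma_+^*-\sigma_-^*)$ against $v\in C^1([0,T]\times X)$: using $\LF v=\Lf v+\sum_j[\Lg v]_j u_j$ and the disintegration of $\mu^*$ on the left, and \eqref{eq:radon} on the right, the $\Lf$-contributions (equal to $\langle\gamma^*,\Lf v\rangle$ on both sides) cancel, leaving
\[
\sum_{j=1}^m\int_{[0,T]\times X}[\Lg v]_j\,\bigl(\hat u_j-[\tilde u]_j\bigr)\,d\gamma^*=0\qquad\text{for all }v\in C^1([0,T]\times X).
\]
Taking $v(t,x)=\sum_{k=1}^n b_k(t)x_k$ with arbitrary $b\in C^1([0,T];\R^n)$ makes $[\Lg v]_j(t,x)=\sum_k b_k(t)g_{kj}(t,x)$, so the identity becomes $\int_0^T b(t)^\top g(t,x^*(t))\bigl(\hat u-\tilde u\bigr)(t,x^*(t))\,dt=0$; by density of $C^1$ in $C$ and the fundamental lemma of the calculus of variations, $g(t,x^*(t))\bigl(\hat u-\tilde u\bigr)(t,x^*(t))=0$ for a.e.\ $t$. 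Finally, since the columns of $g(t,x^*(t))$ do not vanish, $g(t,x^*(t))$ is injective (for $m=1$ this is literally the non-vanishing condition), so $\tilde u(t,x^*(t))=\hat u(t,x^*(t))=u^*(t)$ for a.e.\ $t$, which is $\tilde u=u^*$ $\gamma^*$-a.e.

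The step I expect to be the main obstacle is the very last inversion: passing from $g(t,x^*(t))\,\xi(t)=0$ to $\xi(t)=0$. For scalar input this is immediate from $g\ne0$, but for $m>1$ ``each column nonzero'' is strictly weaker than injectivity of $g(t,x^*(t))$, so either the hypothesis should be read as (or strengthened to) a full-column-rank condition along $x^*$ --- the same point on which Lemma~\ref{lem:Q}(iv) already rests --- or the argument must be localized so that only the linearly relevant combination of columns enters. Everything upstream should be routine: the Radon--Nikodym construction, the cancellation of the $\Lf$-terms, and the reduction of the measure identity to an integral identity along the graph of $x^*$ via the test functions $v(t,x)=b(t)^\top x$ are all mechanical once Theorem~\ref{thm:P}(ii) and Lemma~\ref{lem:Q} are in hand.
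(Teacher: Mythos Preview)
Your argument matches the paper's: Radon--Nikodym for the existence of $\tilde u$, then the second constraint of $(P)$ tested against $C^1$ functions---the paper's ``by the same argument in Equation~\eqref{eq:L2Lf}'' is precisely your disintegration-and-$\hat u$ step, with Lemma~\ref{lem:Q}(iv) supplying $\hat u=u^*$---the only cosmetic difference being that the paper lets $\partial v/\partial x$ range over all of $C^1([0,T]\times X)$ rather than restricting to $v(t,x)=b(t)^\top x$, which is equivalent once $\gamma^*$ is known to be supported on the graph of $x^*$. Your reservation about the final inversion is well founded and applies verbatim to the paper's own proof (and to Lemma~\ref{lem:Q}(iv)): for $m>1$, ``each column of $g(t,x^*(t))$ nonzero'' does not yield injectivity of $g(t,x^*(t))$, so the hypothesis should really be read as full column rank along $x^*$.
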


\begin{proof}
We will prove the first result using Radon-Nikodym theorem, and the second result can be shown by arguing $\langle \gamma^*, \psi \cdot [\tilde{u}-u^*] \rangle = 0$ for any $\psi$ in some dense subset of $L^1([0,T]\times X)$.

$[\sigma_+^*]_j$, $[\sigma_-^*]_j$, and $\gamma^*$ are $\sigma$-finite for all $j \in \{1, \cdots, m\}$ since they are Radon measures defined over a compact set. Define $[\sigma^*]_j = [\sigma_+^*]_j - [\sigma_-^*]_j$ and notice that each $[\sigma^*]_j$ is also $\sigma$-finite. Since $[\sigma_+^*]_j + [\sigma_-^*]_j + [\hat{\sigma}^*]_j = \gamma^*$ and $[\sigma_+^*]_j, [\sigma_-^*]_j, [\hat{\sigma}^*]_j \geq 0$, $\sigma^*$ is absolutely continuous with respect to $\gamma^*$. Therefore as a result of Radon-Nikodym theorem, there exists a $\tilde{u} \in L^1([0,T]\times X, U)$, which is unique $\gamma^*$-almost everywhere, that satisfies Equation \eqref{eq:radon}.

Since $(\mu^*, \mu_T^*, \gamma^*, \sigma_+^*, \sigma_-^*, \hat{\sigma}^*)$ is feasible in $(P)$, we have
\begin{equation}
\LF'\mu^* = \Lf' \gamma^* + \Lg' (\sigma_+^* - \sigma_-^*)
\end{equation}
By the same argument in Equation \eqref{eq:L2Lf}, we can get
\begin{equation}
\begin{aligned}
\int_{[0,T]\times X} \frac{\partial v}{\partial x} g(t,x) \cdot u^*(t) \, d\gamma^* = \int_{[0,T]\times X} \frac{\partial v}{\partial x} g(t,x) \cdot \tilde{u}(t,x) \, d\gamma^*, \quad \forall v \in C^1([0,T]\times X)
\end{aligned}
\end{equation}
or equivalently
\begin{equation}
\label{eq:u_converge}
\int_{[0,T]\times X} \frac{\partial v}{\partial x} g(t,x) \cdot [\tilde{u}(t,x) - u^*(t)] \, d\gamma^* = 0 \quad \forall v \in C^1([0,T]\times X)
\end{equation}
Since $\left\{\frac{\partial v}{\partial x}, v \in C^1([0,T]\times X)\right\} \supset C^1([0,T]\times X)$, Equation \eqref{eq:u_converge} implies
\begin{equation}
\int_{[0,T]\times X} \psi(t,x)\, g(t,x) \cdot [\tilde{u}(t,x) - u^*(t)] \, d\gamma^* = 0 \quad \forall \psi \in C^1([0,T]\times X)
\end{equation}
Since $g(t,x)$ is nonzero $\gamma^*$-almost everywhere, and $C^1([0,T]\times X)$ is dense in $L^1([0,T]\times X)$, this implies $\tilde{u} = u^*$ $\gamma^*$-almost everywhere.
\end{proof}

  \section{Numerical Implementation}
\label{sec:implementation}

We compute a solution to the infinite-dimensional problem $(P)$ via a sequence of finite dimensional approximations formulated as Semi-Definite Programs (SDP)s. 
These are generated by discretizing the measures in $(P)$ using moments and restricting the functions in $(D)$ to polynomials.
The solutions to any of the SDPs in this sequence can
be used to synthesize an approximation to the optimal controllers.
A comprehensive introduction to such moment relaxations
can be found in \cite{lasserre2009}.

To derive this discretization, we begin with a few preliminaries.
Let $\R_k[x]$ denote the space of real valued multivariate polynomials of total degree less than or equal to $k$. 
Then any polynomial $p(x) \in \R_k[x]$ can be expressed in the monomial basis as:
\begin{equation}
p(x) = \sum_{|\alpha| \leq k} p_\alpha x^\alpha = \sum_{|\alpha| \leq k} p_\alpha \cdot (x_1^{\alpha_1}\cdots x_n^{\alpha_n} )
\end{equation}
where $\alpha$ ranges over vectors of non-negative integers such that $|\alpha| = \sum_{i=1}^n \alpha_i \leq k$, and we denote $\text{vec}(p) = (p_\alpha)_{|\alpha| \leq k }$ as the vector of coefficients of $p$.
Given a vector of real numbers $y = (y_\alpha)$ indexed by $\alpha$, we define the linear functional $L_y: \R_k[x] \rightarrow \R$ as:
\begin{equation}
L_y(p) := \sum_\alpha p_\alpha y_\alpha
\end{equation}
Note that, when the entries of $y$ are moments of a measure $\mu$:
\begin{equation}
y_\alpha = \int x^\alpha \, d\mu(x),
\end{equation}
then
\begin{equation}
    \langle \mu, p \rangle = \int \left( \sum_\alpha p_\alpha x^\alpha \right)\, d\mu = L_y(p).
\end{equation}
If $|\alpha| \leq 2k$, the moment matrix $M_k(y)$ is defined as
\begin{equation}
[M_k(y)]_{(\alpha,\beta)} = y_{(\alpha + \beta)}
\end{equation}
Given any polynomial $h \in \R_{l}[x]$ with $l < k$, the localizing matrix $M_k(h,y)$ is defined as
\begin{equation}
[M_k(h,y)]_{(\alpha,\beta)} = \sum_{|\gamma| \leq l} h_\gamma y_{(\gamma + \alpha + \beta)}.
\end{equation}
Note that the moment and localizing matrices are symmetric and linear in moments $y$.

\subsection{LMI Relaxations and SOS Approximations}

To construct approximations to $(P)$, we make the following additional assumptions:
\begin{assum}
The functions $h$, $H$, components of $f$ and $g$ are polynomials.
\end{assum}

We further assume that the state constraints and target set are described using polynomials:
\begin{assum}
$X$ and $X_T$ are defined as semi-algebraic sets:
\begin{equation}
\begin{aligned}
X = \left\{ x \in \R^n \mid h_{X_i} \geq 0, \, \forall i \in \{1, \cdots, n_X\} \right\}\\
X_T = \left\{ x \in \R^n \mid h_{T_i} \geq 0, \, \forall i \in \{1, \cdots, n_T\} \right\}
\end{aligned}
\end{equation}
where $h_{X_i}, h_{T_i} \in \R[x]$.
\end{assum}


 An sequence of SDPs approximating $(P)$ can be obtained by replacing constraints on measures with constraints on moments.
 Since $h(t,x,u)$ and $H(x)$ are polynomials, the objective function of $(P)$ can be written using linear functionals as $L_{y_\mu}(h) + L_{y_{\mu_T}}(H)$. 
 The equality constraints in $(P)$ can be approximated by an infinite-dimensional linear system, which is obtained by restricting to polynomial test functions: $v(t,x) = t^\alpha x^\beta$, $w(t,x) = t^\alpha x^\beta$, $[p]_j(t,x) = t^\alpha x^\beta$, and $q(t,x) = t^\alpha x^\beta$, for any $\alpha \in \N$, $\beta \in \N^n$. 
 For example, Liouville's Equation \eqref{eq:le} is written with respect to moments as:
\begin{equation}
0^\alpha x_0^\beta + L_{y_\mu} (\LF (t^\alpha x^\beta)) = L_{y_{\mu_T}} (T^\alpha x^\beta)
\end{equation}
The positivity constraints in $(P)$ can be replaced with semi-definite constraints on moment matrices and localizing matrices as discussed above.

A finite-dimensional SDP is then obtained by truncating the degree of moments and polynomial test functions to $2k$. 
Let $\Gamma = (\mu,\mu_T,\gamma,\sigma_+,\sigma_-,\hat{\sigma})$, and let $(y_{k,\xi})$ be the sequence of moments truncated to degree $2k$ for each $\xi \in \Gamma$, and let $\mathbf{y}_k$ be a vector of all the sequences $(y_{k,\xi})$. 
The finite-dimensional linear system is then represented by the linear system:
\begin{equation}
A_k(\mathbf{y}_k) = b_k
\end{equation}
Define the $k$-th relaxed SDP representation of $(P)$, denoted $(P_k)$, as:
\begin{flalign} 
&& \inf_{\mathbf{y}_k} & \phantom{4} L_{y_{k,\mu}}(h) + L_{y_{k,\mu_T}}(H) && (P_k) \nonumber \\
&& \text{s.t.} & \phantom{4} A_k(\mathbf{y}_k) = b_k, \nonumber\\
&&& \phantom{4} M_k(y_{k,\xi}) \succeq 0 && \forall \xi \in \Gamma, \nonumber\\
&&& \phantom{4} M_{k_{X_i}}(h_{X_i},y_{k,\xi}) \succeq 0 && \forall (i,\xi) \in \{1,\cdots,n_X\} \times \Gamma\backslash \{\mu_T\}, \nonumber\\
&&& \phantom{4} M_{k-1}(h_{U_j},y_{k,\mu}) \succeq 0 && \forall j \in \{1,\cdots,m\}, \nonumber\\
&&& \phantom{4} M_{k_{T_i}}(h_{T_i},y_{k,\mu_T}) \succeq 0 && \forall i \in \{1,\cdots,n_T\}, \nonumber\\
&&& \phantom{4} M_{k-1}(h_\tau, y_{k,\xi}) \succeq 0 && \forall \xi \in \Gamma \backslash \{\mu_T\} \nonumber
\end{flalign}
where the infimum is taken over the sequences of moments: $(y_{k,\xi})$ for each $\xi \in \Gamma$, $h_\tau(t) = t(T-t)$, $h_{U_j}(u) = 1 - [u]_j^2$, $k_{X_i} = k - \lceil \text{deg}(h_{X_i})/2 \rceil$, $k_{T_i} = k - \lceil \text{deg}(h_{T_i})/2 \rceil$, and $\succeq$ denotes positive semi-definiteness of matrices.

The dual of $(P_k)$ is a Sums-of-Squares (SOS) program denoted by $(D_k)$ for each $k \in \N$, which is obtained by first restricting the optimization space in $(D)$ to the polynomial functions with degree truncated to $2k$ and by then replacing the non-negativity constraints in $(D)$ with SOS constraints. 
Define $Q_{2k}(h_{T_1}, \cdots, h_{T_{n_T}}) \subset \R_{2k}[x]$ to be the set of polynomials $l \in \R_{2k}[x]$ expressible as
\begin{equation}
l = s_0 + \sum_{i=1}^{n_T} s_i h_{T_i}
\end{equation}
for some polynomials $\{s_i\}_{i=0}^{n_T} \subset \R_{2k}[x]$ that are sums of squares of other polynomials. 
Every such polynomial is clearly non-negative on $X_T$. 
Similarly, we define $Q_{2k}(h_\tau, h_{X_1}, \cdots, h_{X_{n_X}}, h_{U_1}, \cdots, h_{U_m}) \subset \R_{2k}[t,x,u]$, and $Q_{2k}(h_\tau, h_{X_1}, \cdots, h_{X_{n_X}}) \subset \R_{2k}[t,x]$.
The $k$-th relaxed SDP representation of $(D)$, denoted $(D_k)$, is then given as
\begin{flalign} 
 \sup_{(v,w,p,q)} & \phantom{4} v(0,x_0) \hspace{4cm} (D_k) \nonumber \\
 \text{s.t.}\hspace{0.3cm} & \phantom{4} \LF(v+w) + q + h \in Q_{2k}(h_\tau, h_{X_1}, \cdots, h_{X_{n_X}}, h_{U_1}, \cdots, h_{U_m}), \nonumber\\
& \phantom{4} -v(T,\cdot) + H \in Q_{2k}(h_{T_1}, \cdots, h_{T_{n_T}}), \nonumber\\
& \phantom{4} -\Lf w - \sum_{j=1}^m p_j - q \in Q_{2k}(h_\tau, h_{X_1}, \cdots, h_{X_{n_X}}), \nonumber\\
& \phantom{4} p - \left(\Lg w \right)^\text{T} \in \left(Q_{2k}(h_\tau, h_{X_1}, \cdots, h_{X_{n_X}})\right)^m, \nonumber\\
& \phantom{4} p + \left(\Lg w \right)^\text{T} \in \left(Q_{2k}(h_\tau, h_{X_1}, \cdots, h_{X_{n_X}})\right)^m, \nonumber\\
& \phantom{4} p \in \left(Q_{2k}(h_\tau, h_{X_1}, \cdots, h_{X_{n_X}})\right)^m \nonumber
\end{flalign}
where the supremum is taken over a tuple of polynomials $(v,w,p,q) \in \R_{2k}[t,x] \times \R_{2k}[t,x] \times \left(\R_{2k}[t,x]\right)^m \times \R_{2k}[t,x]$.

We can first prove that these pair of problems are well-posed:
\begin{thm}
\label{thm:duality_Pk}
For each $k \in \mathbf{N}$, there is no duality gap between $(P_k)$ and $(D_k)$.
\end{thm}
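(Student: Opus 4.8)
The plan is to establish the absence of a duality gap between $(P_k)$ and $(D_k)$ by invoking a standard conic-duality argument for SDP/moment-SOS pairs, following the template used in \cite{lasserre2009} and \cite{lasserre2008}. The two ingredients I would verify are: (a) that $(P_k)$ and $(D_k)$ are genuinely dual conic programs over the cone of positive semidefinite matrices (equivalently, the truncated moment cone and the truncated SOS cone), and (b) that a constraint-qualification/Slater-type condition holds for at least one of the two programs, which upgrades weak duality to strong duality and attainment on the appropriate side. Step (a) is essentially bookkeeping: the objective of $(P_k)$ is linear in the truncated moment vector $\mathbf{y}_k$, the constraints $A_k(\mathbf{y}_k)=b_k$ are affine, and the moment/localizing matrix constraints are linear matrix inequalities; dualizing the LMIs with PSD multipliers and the affine constraints with free multipliers reproduces exactly the SOS certificates appearing in $(D_k)$ (each $Q_{2k}(\cdots)$ membership corresponds to a Putinar-type representation whose Gram matrices are the PSD dual variables). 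I would state this as a direct consequence of \cite[Theorem~3.10]{anderson1987} or the conic-duality results in \cite{lasserre2009}, rather than rederiving it.

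For step (b), the cleanest route is to exhibit a strictly feasible point for $(D_k)$ — i.e.\ polynomials $(v,w,p,q)$ making each SOS constraint hold with a strictly positive-definite degree-$2k$ slack — which by conic duality (Slater's condition on the dual) forces no duality gap and attainment of the infimum in $(P_k)$. A convenient choice: take $w\equiv 0$, take $p_j$ to be a large positive constant (or $\varepsilon(1+\|x\|^{2})$-type polynomial) so that $p_j \pm [\Lg w]_j = p_j$ lies in the interior of $(Q_{2k}(h_\tau,h_{X_1},\dots,h_{X_{n_X}}))$, choose $q$ sufficiently negative so that $-\Lf w - \sum_j p_j - q = -\sum_j p_j - q$ is strictly inside its quadratic module, and then choose $v$ strongly concave in $x$ with large enough second-time-derivative contribution so that $\LF(v+w)+q+h = \LF v + q + h$ and $-v(T,\cdot)+H$ are both strictly inside their respective quadratic modules (using compactness of $[0,T]\times X\times U$ and $X_T$ and the Archimedean property implied by the presence of $h_\tau$, the ball-type constraints in $X$, $X_T$, and $h_{U_j}=1-[u]_j^2$). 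The Archimedean/compactness hypotheses (Assumptions on compactness of $X$, $X_T$ and $U=[-1,1]^m$) are exactly what guarantees such strictly feasible certificates exist for $k$ large; for small $k$ one may need to note that the claim is about each fixed $k$ and that adding a small positive multiple of the Archimedean witness absorbs any shortfall.

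The main obstacle I anticipate is \emph{not} the conic-duality formalism but verifying the interior-point (Slater) condition uniformly in the degree parameter, and in particular handling the coupled constraints of $(D)$: the variables $v$, $w$, $p$, $q$ appear in several inequalities simultaneously (e.g.\ $q$ appears in both the first and third SOS constraints with opposite sign tendencies), so one must check that a single tuple can be strictly inside \emph{all} the quadratic modules at once. I would resolve this by the "slack-chasing" order above — fix $w=0$ to kill the $\Lg w$ coupling, then pick $p$, then $q$, then $v$ — and argue that each choice only needs to dominate a fixed continuous (hence bounded, by compactness) function on the relevant compact set, plus an $\varepsilon$ of Archimedean slack. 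If a fully interior dual point is awkward to write down at every $k$, the fallback is to instead verify a Slater point for $(P_k)$ (a strictly positive definite moment vector, e.g.\ moments of a measure with smooth strictly positive density on the interiors of the relevant sets, suitably normalized to satisfy $A_k(\mathbf{y}_k)=b_k$), which by the same conic-duality theorem yields no gap and attainment in $(D_k)$; one of the two will go through, and either suffices for the statement as written. I would close by citing \cite[Theorem~3.10]{anderson1987} (or the SDP-duality statement in \cite{lasserre2009}) for the conclusion once strict feasibility is in hand.
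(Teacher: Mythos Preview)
Your proposal is correct and follows essentially the same approach as the paper: the paper's proof is a three-line sketch invoking Slater's condition (citing \cite{boyd2004convex}), noting that $(D_k)$ is bounded below, and asserting that the feasible set of $(D_k)$ has an interior point. Your ``slack-chasing'' construction (fix $w=0$, then choose $p$, $q$, $v$ in order) is exactly the kind of argument the paper leaves implicit, and your fallback via a strictly feasible point of $(P_k)$ is a reasonable alternative that the paper does not mention but is not needed.
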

\begin{proof}
This can be proved using Slater's condition (see \cite{boyd2004convex}), which involves noting that $(D_k)$ is bounded below, and then arguing the feasible set has an interior point.
\end{proof}

In fact, the optimal control can be synthesized from the solution as follows:
\begin{rem}
\label{rem:extract}
We can extract a polynomial control law from the solution $y_k^*$ of $(P_k)$ in the following way: given moment sequences truncated to $2k$, we want to find an appropriate control law $u_k^*$ with components $[u_k^*]_j \in \mathbb{R}[t,x]$ such that the analogue of Equation \eqref{eq:radon}:
\begin{equation}
\int\limits_{[0,T]\times X} \hspace*{-.25cm} t^{\alpha_0}x^{\alpha_1} \, [u_k^*]_j(t,x) \, d\gamma(t,x) =\int\limits_{[0,T]\times X} \hspace*{-.25cm} t^{\alpha_0}x^{\alpha_1} \, d( [\sigma_+]_j - [\sigma_-]_j )
\end{equation}
 is satisfied for any $j \in \{1,\cdots, m\}$, and $\sum_{l=0}^n \alpha_l \leq k$, $\alpha_l \geq 0$. 
 When constructing a polynomial control law from the solution of $(P_k)$, these linear equations written with respect to the coefficients of $[u_k^*]_j$ are expressible in terms of $y_{k,\sigma^+}^*$, $y_{k,\sigma^-}^*$, and $y_{k,\gamma}^*$. Direct calculation shows this linear system of equations is:
\begin{equation}
\label{eq:ctrl_extract}
M_k(y_{k,\gamma}^*) \text{vec}([u_k^*]_j) = y_{k, [\sigma_+]_j}^* - y_{k, [\sigma_-]_j}^*
\end{equation}
To extract the coefficients of the control law, one needs only to compute the generalized inverse of $M_k(y_{k,\gamma}^*)$, which exists since $M_k(y_{k,\gamma}^*)$ is positive semidefinite. 
\end{rem}

Note that the degree of the extracted polynomial control law is dependent on the relaxation order $k$. Higher relaxation orders lead to higher degree controllers.

\subsection{Convergence of Relaxed Problems}

Next we can prove the convergence of the pair of approximations:
\begin{thm}
Let $p_k^*$ and $d_k^*$ denote the infimum of $(P_k)$ and supremum of $(D_k)$, respectively. Then $\{p_k^*\}_{k=1}^\infty$ and $\{d_k^*\}_{k=1}^\infty$ converge monotonically from below to the optimal value of $(P)$ and $(D)$.
\end{thm}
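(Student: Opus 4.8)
The plan is to establish the three assertions --- monotonicity, convergence, and the ``from below'' direction --- in that order, exploiting the fact that $(P_k)$ and $(D_k)$ are relaxations of $(P)$ and $(D)$ respectively, together with the no-duality-gap results (Theorem on $(P)$--$(D)$ and Theorem~\ref{thm:duality_Pk}).

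First I would treat the primal side. Each $(P_k)$ is a relaxation of $(P)$ in the sense that the moments of any feasible measure tuple $\Gamma$ for $(P)$, truncated to degree $2k$, satisfy the linear constraint $A_k(\mathbf{y}_k)=b_k$ and the moment/localizing matrix inequalities (these are necessary conditions for a sequence to be a moment sequence of a measure supported on the relevant semi-algebraic sets). Hence $p_k^* \le p^* = J^*$ for every $k$. Monotonicity $p_k^* \le p_{k+1}^*$ follows because any feasible point of $(P_{k+1})$ restricts (by dropping the moments of degree $2k+1$ and $2k+2$, and noting the order-$k$ moment and localizing matrices are principal submatrices of the order-$(k+1)$ ones, hence remain PSD) to a feasible point of $(P_k)$ with the same objective value; so the sequence $\{p_k^*\}$ is nondecreasing and bounded above by $J^*$, and therefore converges to some limit $\bar p \le J^*$.

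Next, the dual side: $(D_k)$ restricts the function tuple in $(D)$ to polynomials of degree $\le 2k$ and replaces nonnegativity constraints by the stronger SOS/quadratic-module membership conditions, so every feasible point of $(D_k)$ is feasible for $(D)$, giving $d_k^* \le d^* = p^*$ (the last equality is the no-duality-gap theorem for $(P)$--$(D)$). Monotonicity $d_k^* \le d_{k+1}^*$ holds since $Q_{2k}\subset Q_{2(k+1)}$ and $\R_{2k}\subset\R_{2(k+1)}$, so the feasible set of $(D_k)$ sits inside that of $(D_{k+1})$. By Theorem~\ref{thm:duality_Pk}, $p_k^* = d_k^*$ for each $k$, so the two limits coincide; call the common limit $\bar p$. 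It remains to show $\bar p = p^* = d^*$, i.e.\ that no gap opens in the limit.

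The main obstacle is precisely this last step --- proving $\bar p \ge p^*$ --- which is the standard but delicate Lasserre-hierarchy convergence argument and cannot be obtained from relaxation alone. I would argue on the dual side: given $\varepsilon>0$, one must produce, for $k$ large enough, a tuple $(v,w,p,q)$ feasible for $(D_k)$ with $v(0,x_0) > d^* - \varepsilon$. Starting from a near-optimal (possibly merely continuous) tuple for $(D)$, one perturbs the inequality constraints to make them strict (adding small positive slacks, shrinking $v$ slightly at $x_0$), then invokes a Stone--Weierstrass polynomial approximation on the compact sets $[0,T]\times X\times U$, $X_T$, etc., to replace the continuous functions by polynomials that still satisfy the now-strict inequalities; finally Putinar's Positivstellensatz (applicable because the describing polynomials $h_\tau$, $h_{X_i}$, $h_{U_j}$, $h_{T_i}$ generate Archimedean quadratic modules, $X$, $X_T$ and $U$ being compact, possibly after adding a redundant ball constraint) certifies that these strictly positive polynomials lie in $Q_{2k}$ for $k$ sufficiently large. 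This shows $\bar p = d^* \ge d^* - \varepsilon$ for all $\varepsilon$, hence $\bar p = d^* = p^*$. Alternatively, one may cite the convergence theorem for moment-LP hierarchies in \cite{lasserre2008} or \cite{lasserre2009} directly, since $(P_k)$ is structurally the same hierarchy applied to $(P)$; I would state the Putinar/Archimedean hypothesis explicitly and then invoke that result to close the argument.
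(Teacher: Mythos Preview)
Your proposal is correct and follows essentially the same route as the paper: monotonicity from nested feasible sets, upper bound $d_k^*\le d^*$ from $(D_k)$ being a restriction of $(D)$, and convergence via the perturb--approximate--Putinar argument on the dual side. The paper organizes things slightly more economically by immediately invoking Theorem~\ref{thm:duality_Pk} to reduce everything to the dual sequence $\{d_k^*\}$, whereas you treat primal and dual in parallel before merging them; but the substantive step is identical. One technical point worth making explicit in your write-up: because the constraints of $(D)$ involve $\LF v$, $\Lf w$, $\Lg w$, the polynomial approximation must be simultaneous in the function \emph{and} its first derivatives (a $C^1$ Stone--Weierstrass theorem, as the paper cites from \cite{hirsch2012}), not merely uniform approximation of the functions themselves.
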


\begin{proof}
This theorem can be proved using a similar technique adopted in the proof of \cite[Theorem~4.2]{majumdar2014}. We first establish a lower found of $d_k^*$ by finding a feasible solution to $(D_k)$ for some $k$, and then show that there exists a convergent subsequence of $\{d_k^*\}_{k=1}^\infty$, by arguing the lower bound can be arbitrarily close to $d^*$ for large enough $k$.

Using Theorem \ref{thm:duality_Pk}, we only need to prove $\{d_k^*\}_{k=1}^\infty$ converge monotonically from below to $d^*$.

Note that the higher the relaxation order $k$, the looser the constraint set of the optimization problem $(D_k)$, so $\{d_k^*\}_{k=1}^\infty$ is non-decreasing.

Suppose $(v,w,p,q) \in C^1([0,T]\times X) \times C^1([0,T]\times X) \times C([0,T]\times X)^{m} \times C([0,T]\times X)$ is feasible in $(D)$. For every $\epsilon > 0$, set
\begin{equation}
\begin{aligned}
\tilde{v}(t,x) &:= v(t,x) + 3\epsilon t - (1+3T)\epsilon,\\
\tilde{w}(t,x) &:= w(t,x) - \epsilon t,\\
\tilde{p}_j(t,x) &:= p_j(t,x) + \epsilon / m, \quad \forall j \in \{1,\cdots,m\},\\
\tilde{q}(t,x) &:= q(t,x) - \epsilon
\end{aligned}
\end{equation}
Therefore, $\LF \tilde{v} = \LF v + 3\epsilon$, $\tilde{v}(T,x) = v(T,x) - \epsilon$, $\LF \tilde{w} = \LF w - \epsilon$, $\Lf \tilde{w} = \Lf w - \epsilon$, and $\Lg \tilde{w} = \Lg w$, and it follows that $(\tilde{v},\tilde{w},\tilde{p},\tilde{q})$ is strictly feasible in $(D)$ with a margin at least $\epsilon$. Since $[0,T]\times X$ and $X$ are compact, and by a generalization of the Stone-Weierstrass theorem that allows for the simultaneous uniform approximation of a function and its derivatives by a polynomial \cite{hirsch2012}, we are guaranteed the existence of polynomials $\hat{v}$, $\hat{w}$, $\hat{p}_j$, and $\hat{q}$ such that $\|\hat{v} - \tilde{v}\|_\infty < \epsilon$, $\|\LF \hat{v} - \LF \tilde{v}\|_\infty < \epsilon$, $\|\hat{w} - \tilde{w}\|_\infty < \epsilon$, $\|\LF \hat{w} - \LF \tilde{w}\|_\infty < \epsilon$, $\|\Lf \hat{w} - \Lf \tilde{w}\|_\infty < \epsilon$, $\|[\Lg \hat{w}]_j - [\Lg \tilde{w}]_j\|_\infty < \epsilon$, $\|\hat{p}_j - \tilde{p}_j\|_\infty < \epsilon$, and $\|\hat{q} - \tilde{q}\|_\infty < \epsilon$. By Remark \ref{rem:compact} and Putinar's Positivstellensatz \cite[Theorem~2.14]{lasserre2009}, those polynomials are strictly feasible for $(D_k)$ for a sufficiently large relaxation order $k$, therefore $d_k \geq \tilde{v}(0,x_0)$. Also, since $\tilde{v}(0,x_0) = v(0,x_0) - (1+3T)\epsilon$, we have $d_k^* \geq \hat{v}(0,x_0) > v(0,x_0)-(2+3T)\epsilon = d-(2-3T)\epsilon$, where $2+3T < \infty$ is a constant. Using the fact that $d_k^*$ is nondecreasing and bounded above by $d^*$, we know $\{d_k^*\}_{k=1}^\infty$ converges to $d$ from below.
\end{proof}

Finally we can prove that the sequence of controls extracted as the solution to the linear equation \eqref{eq:ctrl_extract} from the sequence of SDPs converges to the optimal control:
\begin{thm}
Let $\{y_{k,\xi}^*\}_{\xi \in \Gamma}$ be an optimizer of $(P_k)$, and let $\{\mu_k^*\}_{k=1}^\infty$ be any sequence of measures such that the truncated moments of $\mu_k^*$ match $y_{k,\mu}^*$. 
In addition, for each $k \in \mathbb{N}$, let $u_k^*$ denote the controller constructed by Equation \eqref{eq:ctrl_extract}, and $\tilde{u}$ is defined as in Theorem \ref{thm:lp}.
Then there exists a subsequence $\{k_l\}_{l=1}^\infty \subset \mathbb{N}$ such that:
\begin{equation}
\int\limits_{[0,T]\times X} \hspace*{-.25cm} v(t,x)\,[u_{k_l}^*]_j(t,x)\,d\mu_{k_l}^*(t,x) \xrightarrow{l\rightarrow\infty}
\int\limits_{[0,T]\times X} \hspace*{-.25cm} v(t,x)\,[\tilde{u}]_j(t,x)\,d\mu^*(t,x)
\end{equation}
for all $v \in C^1([0,T]\times X)$, and each $j \in \{1, \cdots, m\}$.
\end{thm}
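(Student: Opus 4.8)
The plan is to realise the solutions $\mathbf{y}_k^*$ of $(P_k)$ as truncated moments of tuples of measures, extract a weak-$*$ convergent subsequence, identify its limit with an optimizer $\Gamma^* = (\mu^*,\mu_T^*,\gamma^*,\sigma_+^*,\sigma_-^*,\hat\sigma^*)$ of $(P)$, and take $\tilde u$ to be the controller associated with this particular $\Gamma^*$ through Theorem~\ref{thm:lp}. The claimed convergence is then proved first for polynomial test functions $v$, via the extraction identity \eqref{eq:ctrl_extract}, and afterwards for a general $v\in C^1([0,T]\times X)$ by a density argument that needs a uniform $L^1$-bound on the extracted controllers.

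\textbf{Step 1 (compactness and identification of the limit).} Using $v\equiv 1$ and $v(t,x)=t$ as test functions in Liouville's equation shows that $\mu_T^*$ has mass $1$ and $\mu^*$ has mass $T$, hence $\gamma=\pi_*^{(t,x)}\mu$ has mass $T$ and $[\sigma_+]_j,[\sigma_-]_j,[\hat\sigma]_j$ (each dominated by $\gamma$ via $[\sigma_+]_j+[\sigma_-]_j+[\hat\sigma]_j=\gamma$) have mass at most $T$. Since all of these measures are supported on fixed compact sets, the truncated moments $y_{k,\xi}^*$ are bounded uniformly in $k$ at each fixed index, so a diagonal argument yields a subsequence $\{k_l\}$ along which every coordinate of every $y_{k_l,\xi}^*$ converges, say to $y_\xi^\infty$. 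For each fixed order the moment and localizing matrices built from $y_{k_l,\xi}^*$ are positive semidefinite once $k_l$ is large and converge entrywise, so the matrices built from $y_\xi^\infty$ are positive semidefinite; by compactness of the supports and Putinar's Positivstellensatz each $y_\xi^\infty$ is the moment sequence of a measure $\xi^\infty$ on the appropriate set. Every equation in $A_{k_l}(\mathbf{y}_{k_l}^*)=b_{k_l}$ associated with a fixed monomial test function holds for all large $l$, so it passes to the limit; hence $\Gamma^\infty:=(\mu^\infty,\mu_T^\infty,\gamma^\infty,\sigma_+^\infty,\sigma_-^\infty,\hat\sigma^\infty)$ is feasible for $(P)$. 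Its cost is $\lim_l\big(L_{y_{k_l,\mu}^*}(h)+L_{y_{k_l,\mu_T}^*}(H)\big)=\lim_l p_{k_l}^* = p^*$ by the preceding theorem, so $\Gamma^\infty$ is optimal; set $\Gamma^*:=\Gamma^\infty$ and let $\tilde u$ be as in Theorem~\ref{thm:lp} for this $\Gamma^*$ (with the convention that $\int v[\tilde u]_j\,d\mu^*$ denotes integration against the $(t,x)$-marginal $\gamma^*$ of $\mu^*$).

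\textbf{Step 2 (polynomial test functions).} Fix a polynomial $v$ and $j\in\{1,\dots,m\}$. Because the truncated moments of $\mu_{k_l}^*$ equal $y_{k_l,\mu}^*$, the constraint $\gamma=\pi_*^{(t,x)}\mu$ is encoded in $A_{k_l}(\mathbf{y}_{k_l}^*)=b_{k_l}$, and $[u_{k_l}^*]_j$ is a polynomial of degree at most $k_l$, for all large $l$ one has $\int v\,[u_{k_l}^*]_j\,d\mu_{k_l}^* = L_{y_{k_l,\gamma}^*}(v\,[u_{k_l}^*]_j) = \mathrm{vec}(v)^{\mathrm T}M_{k_l}(y_{k_l,\gamma}^*)\,\mathrm{vec}([u_{k_l}^*]_j)$, which by \eqref{eq:ctrl_extract} equals $\mathrm{vec}(v)^{\mathrm T}\big(y_{k_l,[\sigma_+]_j}^*-y_{k_l,[\sigma_-]_j}^*\big)=L_{y_{k_l,[\sigma_+]_j}^*}(v)-L_{y_{k_l,[\sigma_-]_j}^*}(v)$. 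Letting $l\to\infty$, the right-hand side tends to $\langle[\sigma_+^*-\sigma_-^*]_j,\,v\rangle$, which by the defining property \eqref{eq:radon} of $\tilde u$ equals $\int v\,[\tilde u]_j\,d\gamma^* = \int v\,[\tilde u]_j\,d\mu^*$. This establishes the claim when $v$ is a polynomial.

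\textbf{Step 3 (general $v$; the main obstacle).} Upgrading to an arbitrary $v\in C^1([0,T]\times X)$ requires a bound on $\int|[u_k^*]_j|\,d\mu_k^*$ uniform in $k$, which is not automatic because $u_k^*$ comes from a generalized inverse and need not take values in $U$. I would obtain it as follows: the constraint $[\sigma_+]_j+[\sigma_-]_j+[\hat\sigma]_j=\gamma$ is part of $A_k(\mathbf{y}_k)=b_k$, so at the level of truncated moments $M_k(y_{k,[\sigma_+]_j}^*)+M_k(y_{k,[\sigma_-]_j}^*)\preceq M_k(y_{k,\gamma}^*)$; writing $S:=\int([u_k^*]_j)^2\,d\mu_k^* = \mathrm{vec}([u_k^*]_j)^{\mathrm T}M_k(y_{k,\gamma}^*)\,\mathrm{vec}([u_k^*]_j)$, applying \eqref{eq:ctrl_extract} together with Cauchy--Schwarz for the positive-semidefinite forms $\mathrm{vec}(\cdot)^{\mathrm T}M_k(y_{k,[\sigma_\pm]_j}^*)\,\mathrm{vec}(\cdot)$ and the mass bound $L_{y_{k,[\sigma_\pm]_j}^*}(1)\le T$ gives $S\le 2\sqrt{S}\,\sqrt{T}$, hence $S\le 4T$ and $\int|[u_k^*]_j|\,d\mu_k^*\le 2T$ for all $k$. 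Now given $\varepsilon>0$ choose, by Stone--Weierstrass, a polynomial $v_\varepsilon$ with $\|v-v_\varepsilon\|_\infty<\varepsilon$; then the error between $\int v\,[u_{k_l}^*]_j\,d\mu_{k_l}^*$ and $\int v_\varepsilon\,[u_{k_l}^*]_j\,d\mu_{k_l}^*$ is at most $2T\varepsilon$, the corresponding error on the limit side is at most $T\varepsilon$ (since $|[\tilde u]_j|\le 1$ and $\gamma^*$ has mass $T$), and the difference of the two middle terms tends to $0$ by Step 2; letting $\varepsilon\downarrow0$ closes the argument. The main obstacle is precisely this uniform control bound — without it the polynomial estimate of Step 2 cannot be extended to all of $C^1$; the remaining points (degree bookkeeping so that $\deg v_\varepsilon\le k_l$ and $\deg(v_\varepsilon[u_{k_l}^*]_j)\le 2k_l$ for large $l$, and checking the Archimedean hypothesis for the sets entering Putinar's theorem, cf. Remark~\ref{rem:compact}) are routine.
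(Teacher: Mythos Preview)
Your proposal is correct and, in fact, supplies a complete argument where the paper simply defers to \cite[Theorem~4.5]{majumdar2014}. The overall architecture---uniform moment bounds, diagonal extraction of a weak-$*$ convergent subsequence, identification of the limit as an optimizer of $(P)$ via Putinar and the previous convergence theorem, and then passage to the limit in the extraction identity---is the standard route and is what the cited reference does as well.

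The one place where you go beyond a routine transcription is Step~3, and it is worth flagging why. The difficulty is that the extracted polynomials $[u_k^*]_j$ are obtained by a generalized inverse of $M_k(y_{k,\gamma}^*)$ and need not take values in $U$, so the density argument for general $v\in C^1$ requires an a~priori bound on $\int |[u_k^*]_j|\,d\mu_k^*$ that is uniform in $k$. Your Cauchy--Schwarz computation with the PSD forms $M_k(y_{k,[\sigma_\pm]_j}^*)\preceq M_k(y_{k,\gamma}^*)$, yielding $S\le 2\sqrt{T}\sqrt{S}$ and hence $\int([u_k^*]_j)^2\,d\mu_k^*\le 4T$, is clean and uses exactly the moment-level inequality encoded by the constraint $[\sigma_+]_j+[\sigma_-]_j+[\hat\sigma]_j=\gamma$ in $(P_k)$. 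This step is not spelled out in the paper, and getting it right is the substantive content of the theorem beyond bookkeeping.

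Two minor remarks on presentation: in Step~1 the uniform boundedness of $y_{k,\xi}^*$ at each fixed multi-index does not come from the $y_{k,\xi}^*$ being moments of measures (they are only pseudo-moments at finite $k$); it follows instead from the localizing-matrix constraints together with the known zeroth moment, via the standard Lasserre argument---you may want to say this explicitly. In Step~2 the vectors $y_{k_l,[\sigma_\pm]_j}^*$ are indexed up to degree $2k_l$ while $\mathrm{vec}(v)$ is indexed up to degree $k_l$, so the inner product should be read as $L_{y_{k_l,[\sigma_\pm]_j}^*}(v)$, as you indeed write immediately after; the intermediate expression $\mathrm{vec}(v)^{\mathrm T}(y_{k_l,[\sigma_+]_j}^*-y_{k_l,[\sigma_-]_j}^*)$ is a slight abuse of notation.
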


\begin{proof}
This is an extension of the result in \cite[Theorem~4.5]{majumdar2014}.
\end{proof}

\subsection{Free Terminal Time}

Though this original problem formulation is powerful, it is sometimes useful to consider the optimal control problem where  the system state has to be driven to $X_T$ before a fixed time $T_0$, and not necessarily remain in $X_T$ afterwards (as opposed to reaching $X_T$ exactly at time $T$). 
We refer to this problem as free terminal time problem.
We adapt the notation $T$ to denote the first time a trajectory reaches $X_T$ (the terminal time), and an admissible pair $(x,u)$ can be redefined by the following conditions:
Given a point $x_0 \in \R^n$ and a $T_0 > 0$, there exists a $T$ satisfying $0<T\leq T_0$, a control $u :[0,T] \to U$, and an absolutely continuous function $x : [0,T] \to \R^n$ such that:
\begin{enumerate}
\item $x(0) = x_0$ and  $x(T) \in X_T$,
\item $x(t) \in X$ for all  $t \in [0,T]$, and
\item $\dot{x}(t) = F(t,x(t),u(t))$ for almost every $t \in [0,T]$.
\end{enumerate}
In practice this formulation requires, we modify $(OCP)$ by adding in another constraint $0<T \leq T_0$ since $T$ is a variable now.

The primal LP that solves the free terminal time is obtained by modifying the support of $\mu_T$ in $(P)$ to be $[0,T_0]\times X_T$, and substituting $\delta_T \otimes \mu_T$ with $\mu_T$ in its first constraint. 
The only modification to $(D)$ is that the second constraint is imposed for all time $t \in [0,T_0]$ instead of just at time $T$. 
All results from the previous sections can be extended to the free-terminal-time case with nearly identical proofs, and the numerical implementation follows in a straightforward manner.
Note in particular the following formulation:
\begin{rem}
\label{rem:freeT}
When $h \equiv 1$ and $H \equiv 0$ with free terminal time, the $(OCP)$ can be interpreted as a minimum time problem, where our task is to find an admissible pair $(x,u)$ such that the trajectory reaches the target set as quickly as possible.
\end{rem}

  \section{Examples}
\label{sec:examples}

In this section, we consider three examples. 
We first consider a pair of examples on the double integrator system which is used to validate the performance of our algorithm.
Next, we consider the application of our algorithm on a system with degenerate linearization: the Dubins Car.

\subsection{Double Integrator}

To benchmark the performance of our approach, we begin by considering the double integrator:
\begin{equation}
\begin{aligned}
\dot{x}_1 & = x_2\\
\dot{x}_2 & = u
\end{aligned}
\end{equation}
where the system state is $x = (x_1, x_2)$ with bounding set $X = \R^2$, input $u$ is restricted in $U = [-1,1]$, and the target set is $X_T = \{(0,0)\}$. 
Even though $X$ is not compact, we may impose the additional constraint $\|x(t)\| \leq N$ for some large $N$ \cite[Section~5.1]{lasserre2008}. 
However, this additional constraint is not enforced in the numerical implementation.

We solve the free terminal time problem with $x_0 = (0.3, 1)$, $h=1$, $H = 0$, and $T_0 = 3$. 
According to Remark \ref{rem:freeT}, this is essentially the problem of finding $u(t) \in U$ such that system state can be driven from $(0.3,1)$ to $(0,0)$ in minimum time. 
For this simple system, the optimal control action and therefore trajectory are analytically computable. 
The optimal solution is compared to the result of our method with $2k=6$, $8$, and $12$ in Figure \ref{fig:DI}. 
To further illustrate we are able to handle more complicated cost functions, we consider the Linear Quadratic Regulator (LQR) problem on double integrator system, where $x_0 = (1,1)$, $X = X_T = \R^2$, $U = [-1,1]$, $h(t,x,u) = x_1^2 + x_2^2 + 20u^2$, $H = 0$, and $T = 5$.
The result of our method with $2k=6$, $8$, and $12$ is compared to a standard finite-horizon LQR solver in Figure \ref{fig:DI_LQR}.

\begin{figure}[!ht]
\begin{minipage}{0.95\columnwidth}
\centering
	\subfloat[Control action \label{fig:DI_control}]{\includegraphics[trim={1cm 6.5cm 2cm 7cm},clip,width=0.49\textwidth]{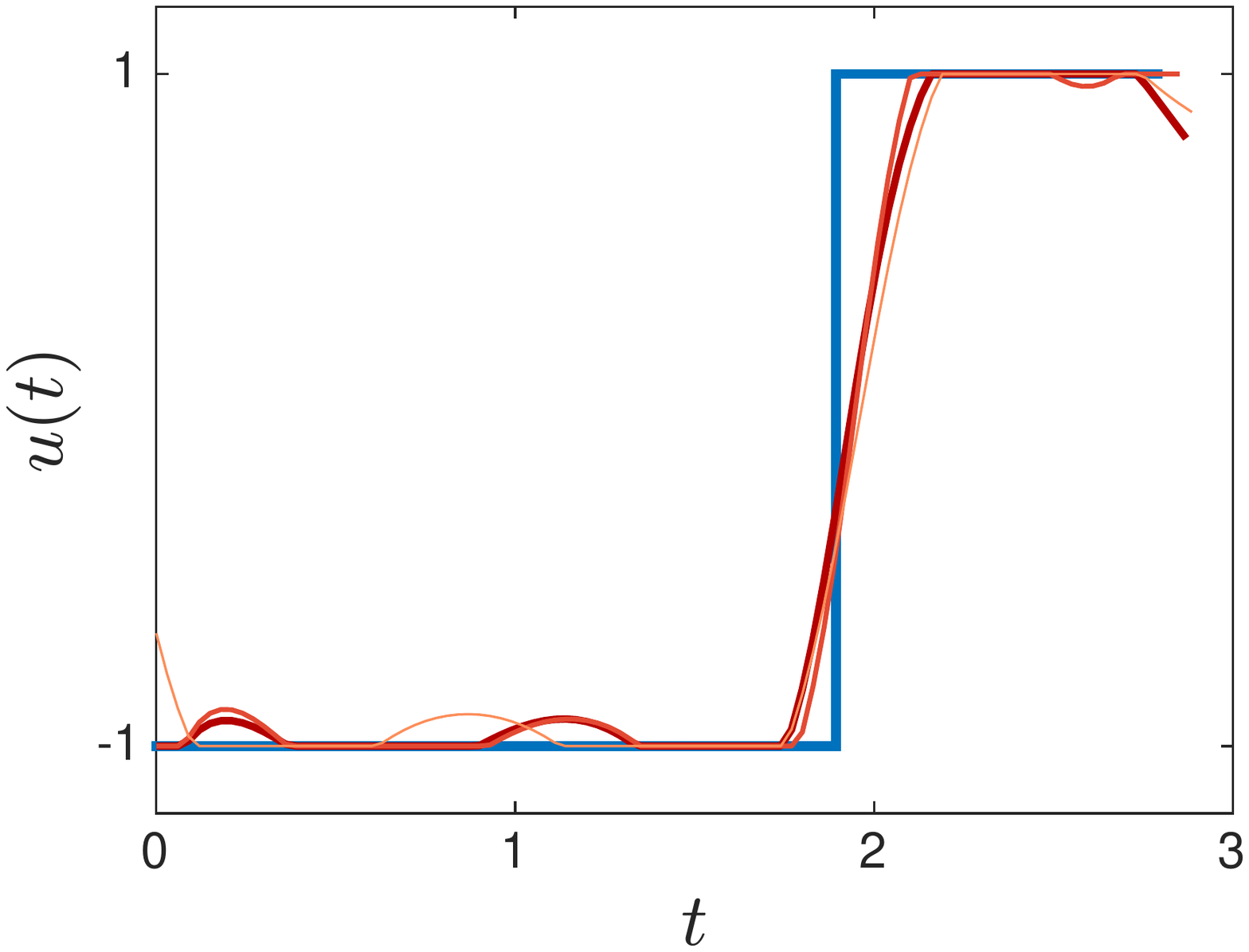}}
    \subfloat[Trajectory\label{fig:DI_traj}]{\includegraphics[trim={1cm 6.5cm 2cm 7cm},clip,width=0.49\textwidth]{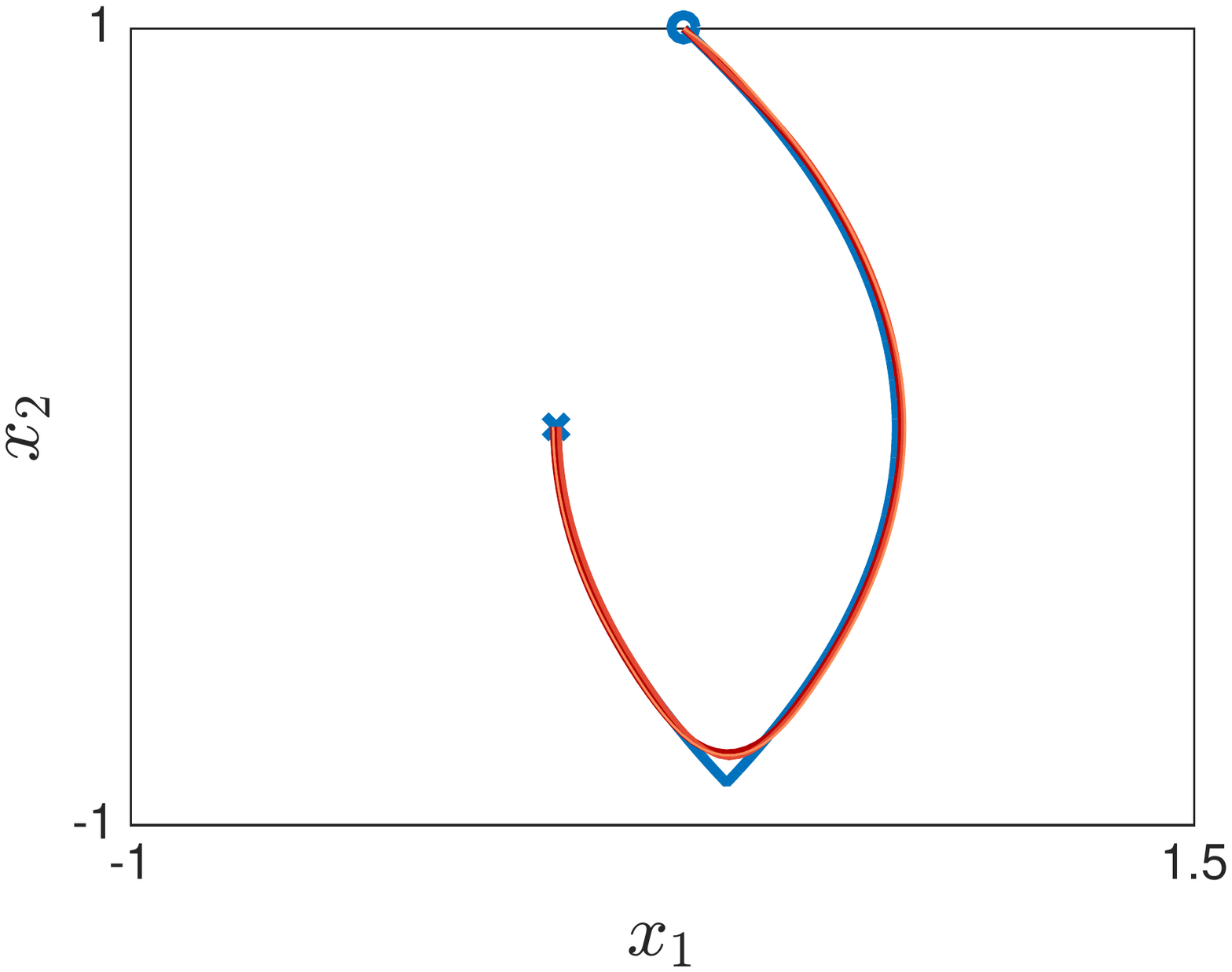}}
	\end{minipage}
\centering
\caption{An illustration of the performance of our algorithm on a free final time version of the double integrator problem. 
The blue circle indicates the given initial state $x_0=(0.3,1)$, and the blue cross shows the target point $(0,0)$.
The blue line is the analytically compute optimal control, while the red lines are control actions generated by our method. 
As the saturation increases the corresponding degree of relaxation increases: $2k=6$, $2k=8$, and $2k=12$. Figure \ref{fig:DI_control} depicts the control action whereas Figure \ref{fig:DI_traj} illustrates the action of the controller when forward simulated through the system.}
\label{fig:DI}
\end{figure}

\begin{figure}[!ht]
\begin{minipage}{0.95\columnwidth}
\centering
	\subfloat[Control action \label{fig:DI_LQR_control}]{\includegraphics[trim={1cm 6.5cm 2cm 7cm},clip,width=0.49\textwidth]{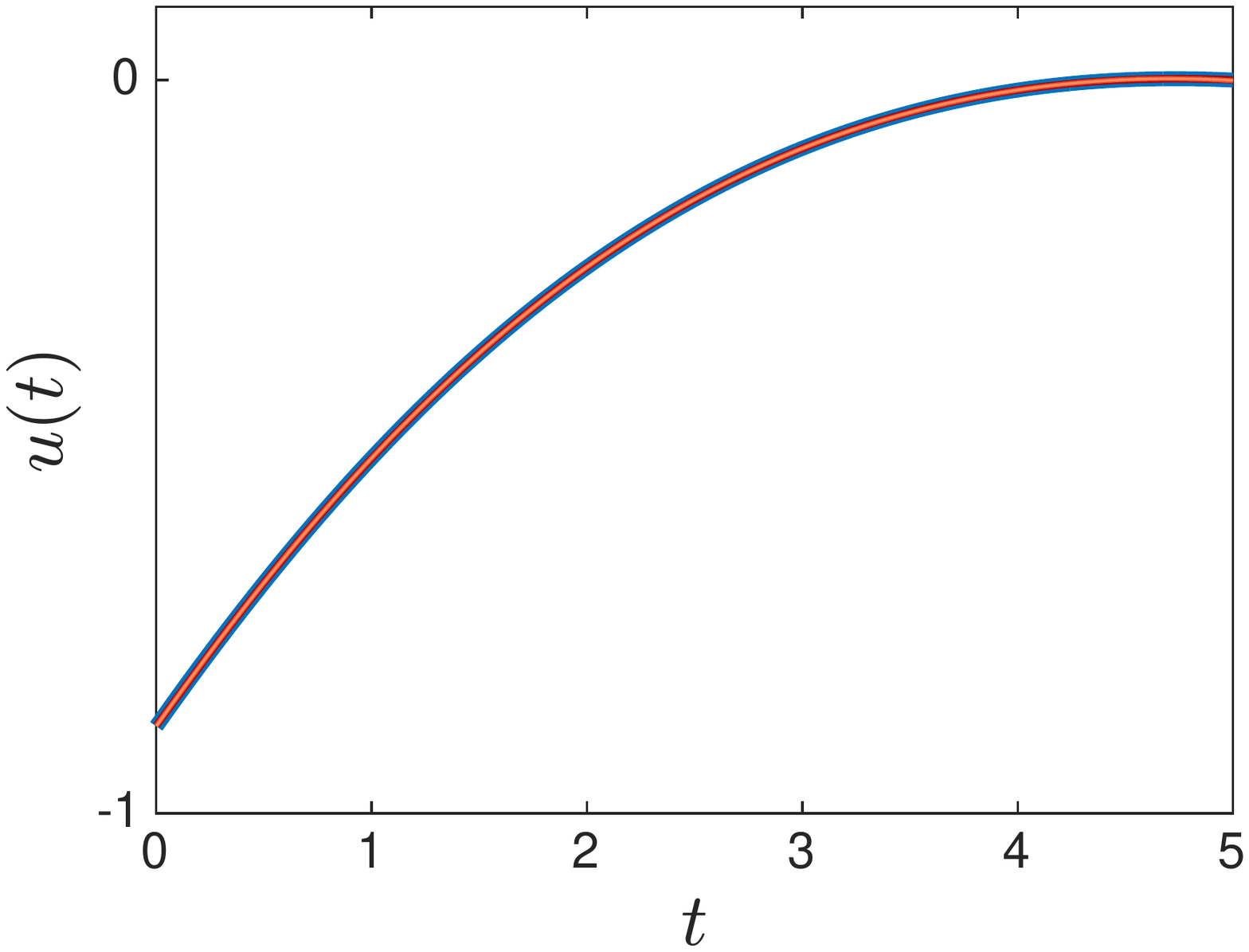}} 
    \subfloat[Trajectory\label{fig:DI_LQR_traj}]{\includegraphics[trim={1cm 6.5cm 2cm 7cm},clip,width=0.49\textwidth]{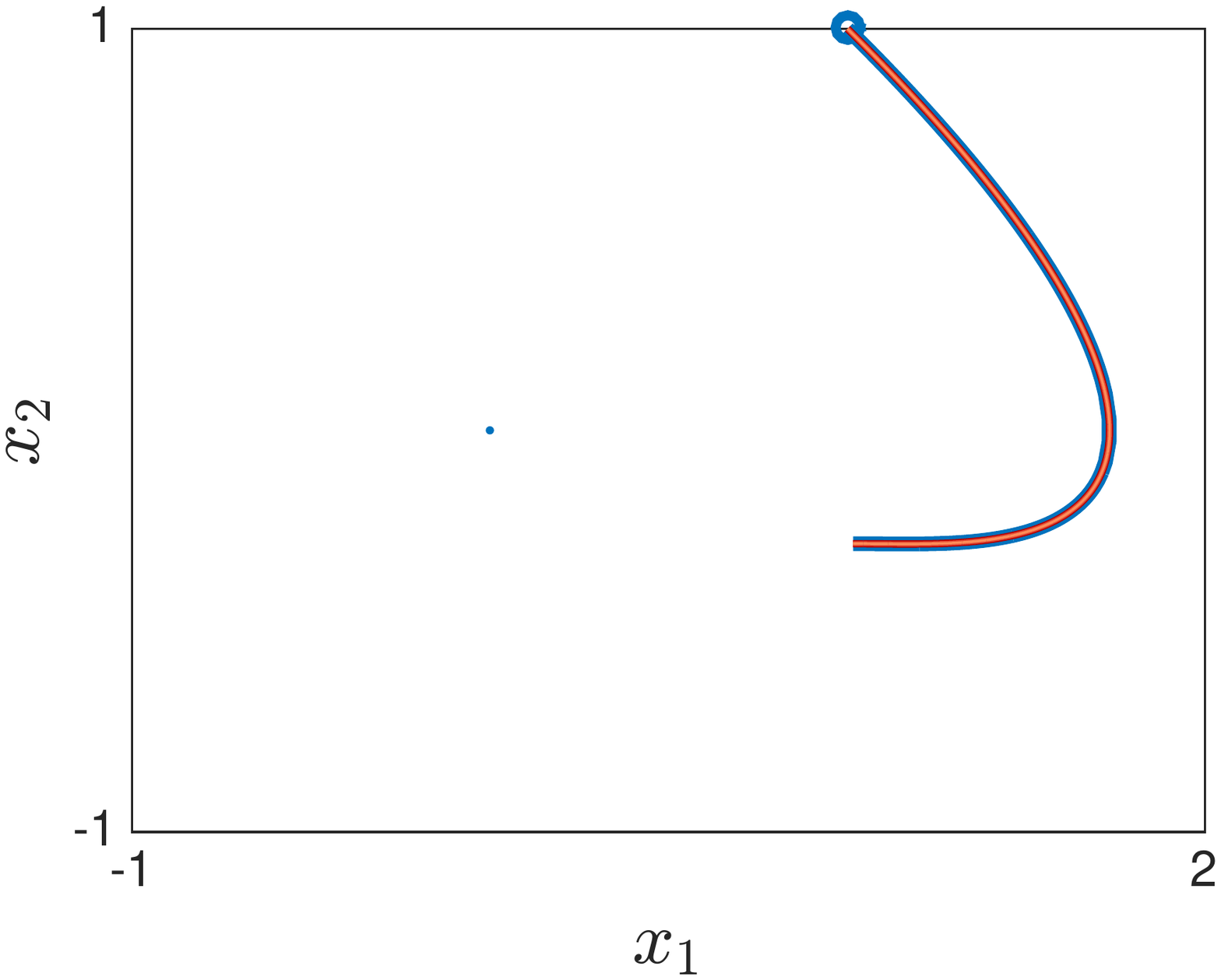}}
	\end{minipage}
\centering
\caption{An illustration of the performance of our algorithm on an LQR version of the double integrator problem. 
The blue circle indicates the given initial state $x_0 = (1,1)$.
The blue line is the analytically computed optimal control, while the red lines are control actions generated by our method. As the saturation increases the corresponding degree of relaxation increases: $2k=6$, $2k=8$, and $2k=12$. Figure \ref{fig:DI_LQR_control} depicts the control action whereas Figure \ref{fig:DI_LQR_traj} illustrates the action of the controller when forward simulated through the system.}
\label{fig:DI_LQR}
\end{figure}

\subsection{Dubins Car Model}

Next, we consider the Dubins Car which is a kinematic car model:
\begin{equation}
\begin{aligned}
\dot{x}_1 &= V \cos (\theta)\\
\dot{x}_2 &= V \sin (\theta)\\
\dot{\theta} &= \omega
\end{aligned}
\end{equation}
where the system state $x = (x_1, x_2, \theta)$ is bounded by $X = [-1,1]\times [-1,1]\times [-\pi,\pi]$, and control input $u = (V, \omega)$ is restricted to $U = [0,1]\times [-3,3]$. 
Although the dynamics are not polynomials, they are approximated by 2nd-order Taylor expansion around $x=(0,0,0)$ in the numerical implementation.
Again, we consider the following LQR problem: the initial condition is $x_0 = (-0.8,0.8,0)$, target set $X_T = X$ is the entire space, $T_0 = 3$, and cost functions are $h = (x_1 - 0.5)^2 + (x_2 + 0.4)^2 + V^2 + (\omega/3)^2$, $H = 0$. 
Note that this problem cannot be solved using a standard LQR solver, because Dubins car has degenerate linearization. 
The result of our method is shown in Figure \ref{fig:Dubins}.

\begin{figure}[!ht]
\begin{minipage}{0.99\columnwidth}
\centering
\subfloat[Control action: $V$\label{fig:dubins_V}]{\includegraphics[trim={1cm 6.5cm 2cm 7cm},clip,width=0.33\textwidth]{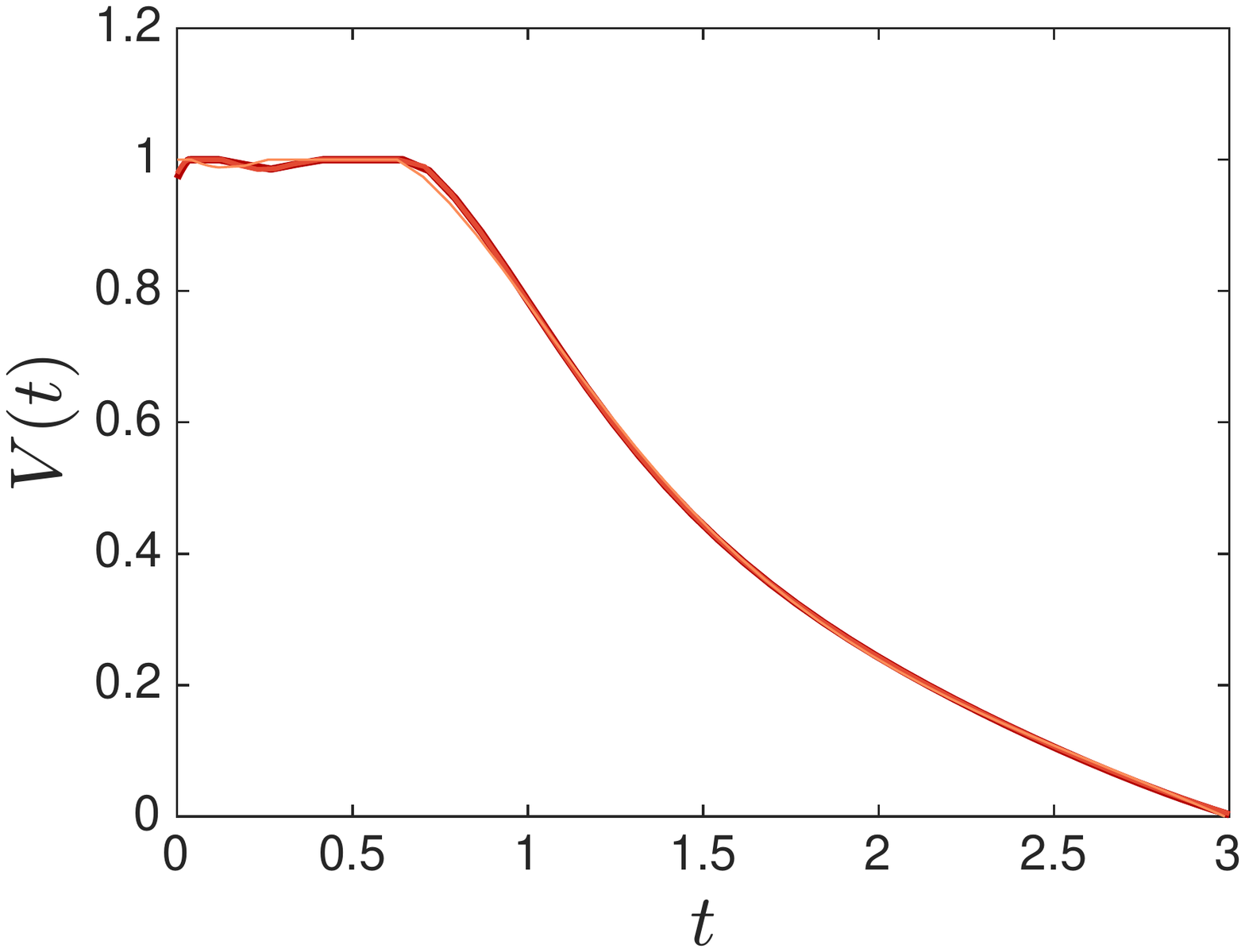}}
\hfil
\subfloat[Control action: $\omega$\label{fig:dubins_omega}]{\includegraphics[trim={1cm 6.5cm 2cm 7cm},clip,width=0.33\textwidth]{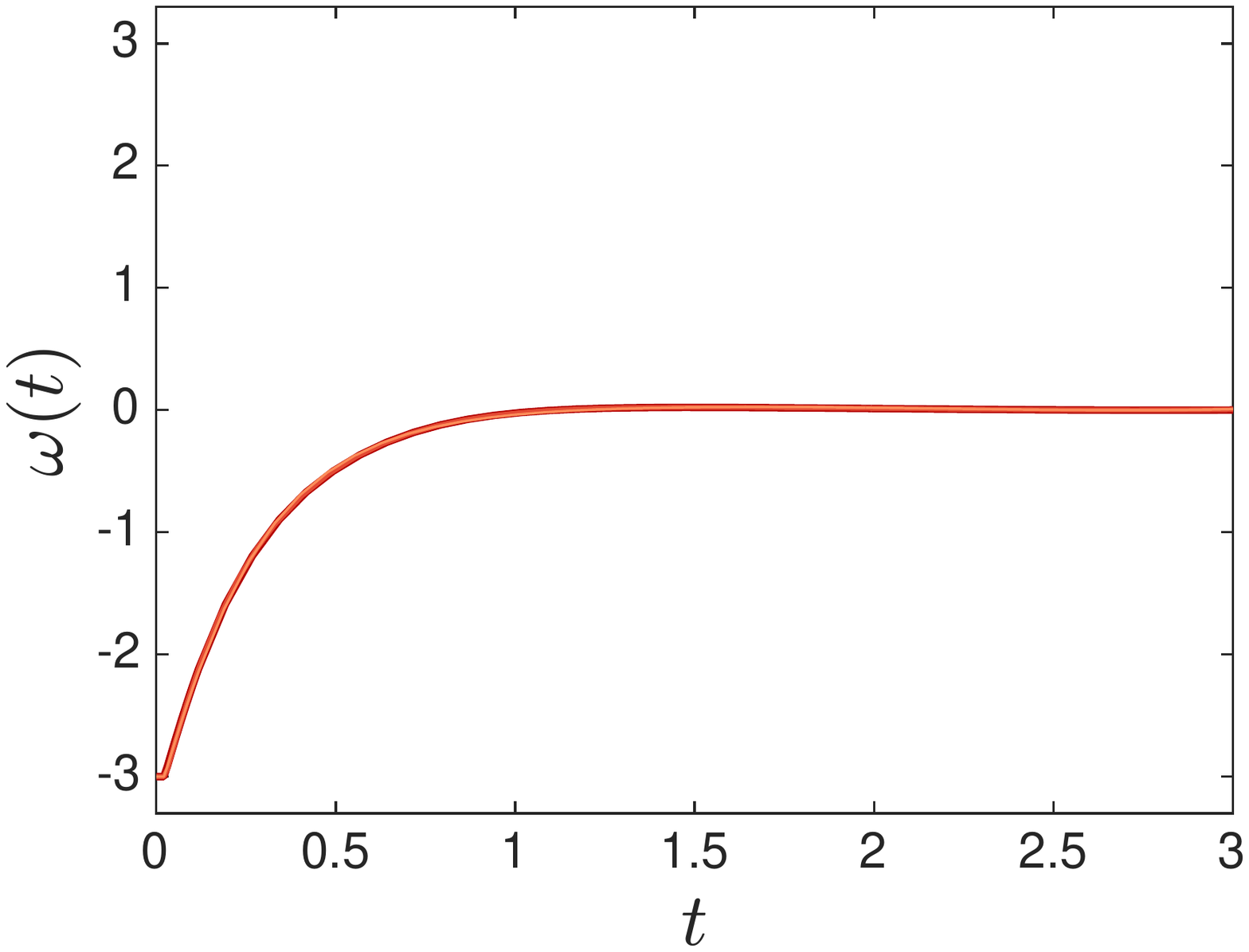}}
\subfloat[Trajectory\label{fig:dubins_traj}]{\includegraphics[trim={1cm 6.5cm 2cm 7cm},clip,width=0.33\textwidth]{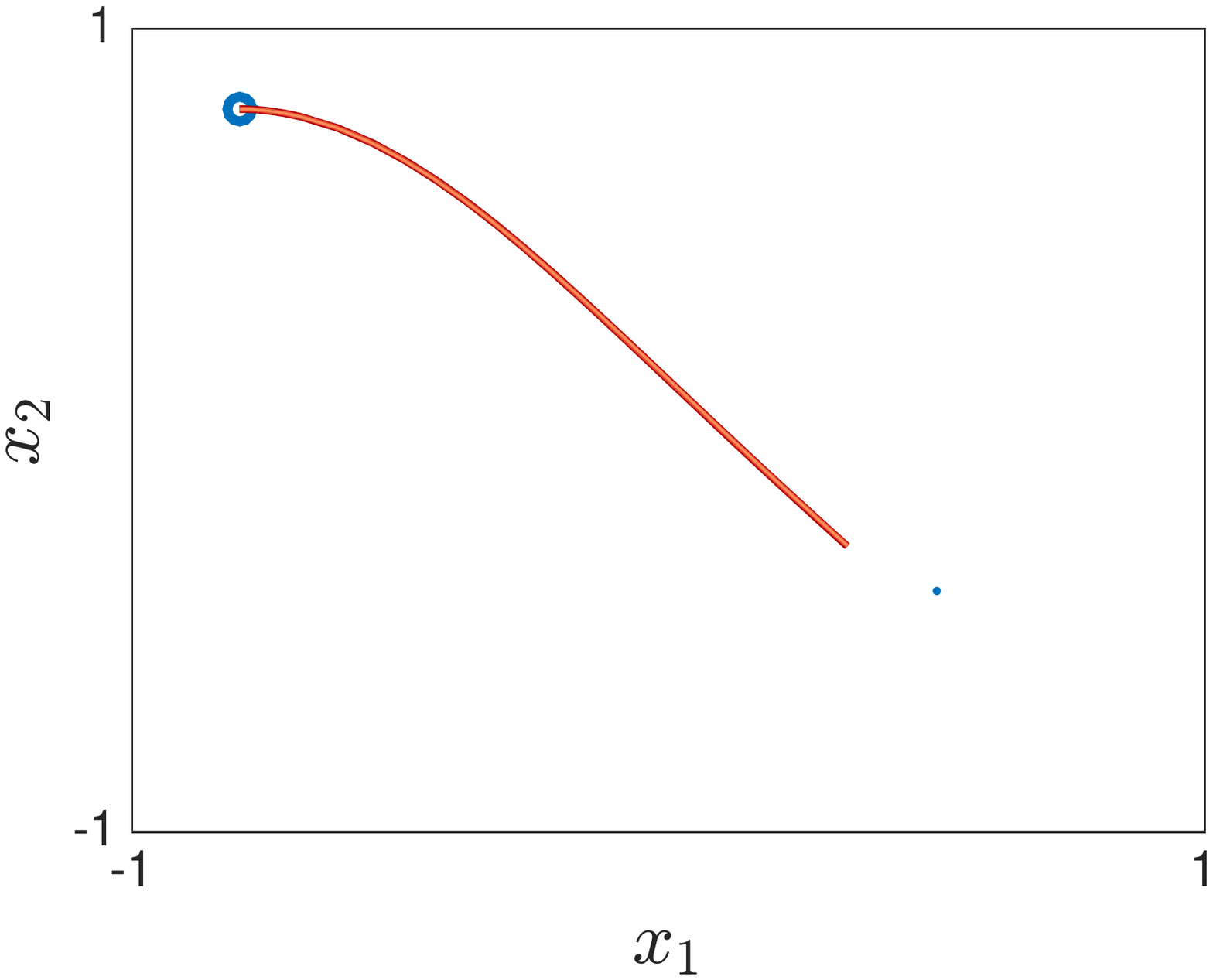}}
\end{minipage}
\caption{ An illustration of the performance of our algorithm on an LQR version of the Dubins Car problem. 
The blue circle indicates the given initial state $x_0$.
The red lines are control actions generated by our method. As the saturation increases the corresponding degree of relaxation increases: $2k=6$, $2k=8$, and $2k=10$. Figures \ref{fig:dubins_V} and \ref{fig:dubins_omega} depict the control action whereas Figure \ref{fig:dubins_traj} illustrates the action of the controller when forward simulated through the system.}
\label{fig:Dubins}
\end{figure}
  \section{Conclusion}

This paper presents an approach for solving the optimal control problem by formulating an infinite-dimensional LP over the space of non-negative measures. 
Finite dimensional SDPs are constructed as relaxations to give both a lower bound of the optimal cost and a polynomial approximation of the optimal control action. 
In contrast to other numerical methods, our approach proposes a convergent convex formulation of the optimal control problem even in the instance of a discontinuous optimal control, which can be solved efficiently on convex optimization program solvers.

\bibliography{root.bib}

\begin{thebibliography}{10}

\bibitem{bertsekas1995dynamic}
Dimitri~P Bertsekas.
\newblock {\em Dynamic programming and optimal control}, volume~1.
\newblock Athena Scientific, Belmont, MA, 1995.

\bibitem{berkovitz2013optimal}
Leonard~David Berkovitz.
\newblock {\em Optimal control theory}, volume~12.
\newblock Springer Science \& Business Media, 2013.

\bibitem{bardi2008optimal}
Martino Bardi and Italo Capuzzo-Dolcetta.
\newblock {\em Optimal control and viscosity solutions of
  Hamilton-Jacobi-Bellman equations}.
\newblock Springer Science \& Business Media, 2008.

\bibitem{von1992direct}
Oskar Von~Stryk and Roland Bulirsch.
\newblock Direct and indirect methods for trajectory optimization.
\newblock {\em Annals of operations research}, 37(1):357--373, 1992.

\bibitem{polak2012optimization}
Elijah Polak.
\newblock {\em Optimization: algorithms and consistent approximations}, volume
  124.
\newblock Springer Science \& Business Media, 2012.

\bibitem{vinter1993}
Richard Vinter.
\newblock Convex duality and nonlinear optimal control.
\newblock {\em SIAM journal on control and optimization}, 31(2):518--538, 1993.

\bibitem{lasserre2008}
Jean~B Lasserre, Didier Henrion, Christophe Prieur, and Emmanuel Tr{\'e}lat.
\newblock Nonlinear optimal control via occupation measures and
  lmi-relaxations.
\newblock {\em SIAM Journal on Control and Optimization}, 47(4):1643--1666,
  2008.

\bibitem{henrion2008nonlinear}
Didier Henrion, Jean~B Lasserre, and Carlo Savorgnan.
\newblock Nonlinear optimal control synthesis via occupation measures.
\newblock In {\em Decision and Control, 2008. CDC 2008. 47th IEEE Conference
  on}, pages 4749--4754. IEEE, 2008.

\bibitem{korda2016}
Milan Korda, Didier Henrion, and Colin~N Jones.
\newblock Controller design and value function approximation for nonlinear
  dynamical systems.
\newblock {\em Automatica}, 67:54--66, 2016.

\bibitem{vinter2010optimal}
Richard Vinter.
\newblock {\em Optimal control}.
\newblock Springer Science \& Business Media, 2010.

\bibitem{majumdar2014}
Anirudha Majumdar, Ram Vasudevan, Mark~M Tobenkin, and Russ Tedrake.
\newblock Convex optimization of nonlinear feedback controllers via occupation
  measures.
\newblock {\em The International Journal of Robotics Research}, 2014.

\bibitem{bogachev2007}
Vladimir~I Bogachev.
\newblock {\em Measure theory}, volume~1.
\newblock Springer Science \& Business Media, 2007.

\bibitem{henrion2014}
Didier Henrion and Milan Korda.
\newblock Convex computation of the region of attraction of polynomial control
  systems.
\newblock {\em IEEE Transactions on Automatic Control}, 59(2):297--312, 2014.

\bibitem{frankowska2000filippov}
H{\'e}l{\`e}ne Frankowska and Franco Rampazzo.
\newblock Filippov's and filippov--wa{\.z}ewski's theorems on closed domains.
\newblock {\em Journal of Differential Equations}, 161(2):449--478, 2000.

\bibitem{anderson1987}
Edward~J Anderson and Peter Nash.
\newblock {\em Linear programming in infinite-dimensional spaces: theory and
  applications}.
\newblock John Wiley \& Sons, 1987.

\bibitem{lasserre2009}
Jean~Bernard Lasserre.
\newblock {\em Moments, positive polynomials and their applications}, volume~1.
\newblock World Scientific, 2009.

\bibitem{boyd2004convex}
Stephen Boyd and Lieven Vandenberghe.
\newblock {\em Convex optimization}.
\newblock Cambridge university press, 2004.

\bibitem{hirsch2012}
Morris~W Hirsch.
\newblock {\em Differential topology}, volume~33.
\newblock Springer Science \& Business Media, 2012.

\end{thebibliography}
\bibliographystyle{unsrt}
\end{document}